\newtheorem{lemma}{Lemma}
\newtheorem{prop}[lemma]{Proposition}
\newtheorem{cor}[lemma]{Corollary}
\newtheorem{defi}[lemma]{Definition}
\newtheorem{thm}[lemma]{Theorem}
\newtheorem{rmk}[lemma]{Remark}
\newcommand{\bb}[1]{\mathbb{#1}}
\newcommand{\fr}[1]{\mathfrak{#1}}
\newcommand{\ra}{\rightarrow}
\newcommand{\D}{\mathcal{V}}
\newcommand{\Der}{\mathrm{Der}}
\newcommand{\Sp}{\mathbb{S}}
\newcommand{\GL}{\mathrm{GL}}
\newcommand{\del}{\partial}
\newcommand{\Hom}{\mathrm{Hom}}
\DeclareMathOperator{\rank}{rank}
\begin{document}

\title{Representations of the Lie algebra of vector fields on a sphere}
\author{Yuly Billig and Jonathan Nilsson}
\date{}
\maketitle

\begin{abstract}
\noindent For an affine algebraic variety $X$ we study a category of modules that admit compatible actions of both the algebra of functions on $X$ and the Lie algebra of vector fields on $X$. In particular, for the case when $X$ is the sphere $\mathbb{S}^2$, we construct a set of simple modules that are finitely generated over $A$. In addition, we prove that the monoidal category that these modules generate is equivalent to the category of finite-dimensional rational $\mathrm{GL}_2$-modules.
\end{abstract}

\section{Introduction}

In 1986 David Jordan proved simplicity of Lie algebras of polynomial vector fields on smooth irreducible affine algebraic varieties (\cite{J1}, see also \cite{J2} and \cite{BF3}).
Yet, representation theory for this important class of Lie algebras remains largely undeveloped. The goal of the present paper is to investigate as a test case, representation theory of the Lie algebra of polynomial vector fields on a sphere $\Sp^2$. Previously, representation theory was developed only for the classical Lie algebras of Cartan type -- polynomial vector fields on an affine space and on a torus. Representations of the Lie algebras of vector fields on an affine space were studied by Rudakov in 1974 \cite{Ru}. 

An important classification result on representations of the Lie algebra of vector fields on a circle was established by Mathieu \cite{Ma}.
The case of an $N$-dimensional torus was studied by Larsson \cite{La}, Eswara Rao \cite{E1}, \cite{E2}, Billig \cite{Bi}, Mazorchuk-Zhao \cite{MZ} and Billig-Futorny \cite{BF1}. The culmination of this work was the proof in \cite{BF2} of Rao's conjecture on classification of irreducible weight modules for the Lie algebra of vector fields on a torus with finite-dimensional weight spaces. According to this classification, every such module is either of the highest weight type, or a quotient of a tensor module.

When we move from the torus to general affine algebraic varieties, the first difficulty that arises is the absence of a Cartan subalgebra. It was shown in \cite{BF3} that even in the case of an affine elliptic curve, the Lie algebra of vector fields does not contain non-zero semisimple or nilpotent elements. This demonstrates that the theory of this class of simple Lie algebras is very different from the classical theory of simple finite-dimensional Lie algebras, where roots and weights play a fundamental role. When studying representation theory of a simple Lie algebra, one has to impose some reasonable restrictions on the class of modules, since the description of simple modules in full generality is only known for $\mathfrak{sl}_2$ \cite{Bl}. 

In case of vector fields on a torus, a natural restriction is the existence of a weight decomposition with finite-dimensional weight spaces. A theorem proved in \cite{BF2} states that a simple weight module $M$ for the Lie algebra of vector fields on a torus admits a cover $\widehat{M} \rightarrow M$, where $\widehat{M}$ is a module for both the Lie algebra of vector fields and the commutative algebra of functions on a torus.
This suggests that a reasonable category of modules for the Lie algebra $\D$ of vector fields on an affine variety $X$ will be those that admit a compatible action of the algebra $A$ of polynomial functions on $X$. We refer to such modules as $A\D$-modules. A finiteness condition will be a requirement that the module is finitely generated over $A$.

We begin by discussing the general theory of $A\D$-modules, defining dual modules and tensor product in this category. Then the focus of the paper shifts to the study of the case of a sphere $\Sp^2$. A new feature here compared to the torus and circle is that Lie algebra of vector fields on $\Sp^2$, as well as its $A\D$-modules, are not free as modules over $A$. We construct a class of tensor modules (geometrically these are modules of tensor fields on a sphere), and prove their simplicity in the category of $A\D$-modules. We also show that the monoidal category generated by simple tensor modules on $\Sp^2$ is semisimple and is equivalent to the category of finite-dimensional rational $\GL_2$-modules. 

The methods that we employ are a combination of Lie theory and commutative algebra. Hilbert's Nullstellensatz is an essential ingredient in the proof of simplicity of tensor modules.  

\section{Generalities}
Let $X \subset \bb{A}^{n}$ be an algebraic variety over an algebraically closed field ${\bf k}$ of characteristic zero. Write $A_X$ for the algebra of polynomial functions on $X$, and let $\D_X=\Der_{\bf k}(A_X)$ be the Lie algebra of polynomial vector fields on $X$. When the variety $X$ is understood by the context we shall drop it as a subscript. Note that $\D$ is an $A$-module and that $A$ is a $\D$-module. For $a\in A$ and $\eta \in \D$ we shall write the latter action as $\eta(a)$.

Consider a vector space $M$ equipped with a module structure for both the associative commutative unital algebra $A$ and for the Lie algebra $\D$, such that the two structures are compatible in the following sense: 
For every $a \in A$, $\eta \in \D$, and $m \in M$ we have
\[\eta \cdot (a \cdot m) = \eta(a) \cdot m + a \cdot (\eta \cdot m).\]
This is equivalent to saying that $M$ is a module over the smash product algebra $A\#U(\D)$, see \cite{Mo} for details. For simplicity we shall write just$A\D$ for $A\#U(\D)$. A morphism of $A\D$-modules is a map that preserves both the $A$- and the $\D$-structures.
The category of $A\D$-modules is clearly abelian since $A$-Mod and $\D$-Mod are.

For two $A\D$-modules $M$ and $N$ we may form the tensor product $M\otimes_{A}N$ which makes sense since $A$ is commutative. This is equipped with the natural $A$-module structure
\[a\cdot(m\otimes n) = (a\cdot m)\otimes n,\]
where the right side also equals $m \otimes (a\cdot n)$.

The Lie algebra of vector fields acts on the tensor product by
\[\eta(m\otimes n) = (\eta \cdot m)\otimes n + m\otimes (\eta \cdot n),\]
as usual. We verify that these two structures are compatible in the above sense. We have 
\begin{align*}
\eta \cdot (a\cdot (m\otimes n)) &= \eta \cdot ((a\cdot m)\otimes n)=(\eta \cdot(a\cdot m))\otimes n + (a\cdot m)\otimes (\eta \cdot n) \\
&= (\eta(a) \cdot m) \otimes n +(a\cdot (\eta \cdot m))\otimes n + a\cdot (m \otimes (\eta \cdot n)) \\
&=\eta(a) \cdot(m \otimes n) + a\cdot(\eta \cdot (m\otimes n)).
\end{align*}
This shows that $M\otimes_{A}N$ is an $A\D$-module, and that $A\D$-Mod is a monoidal category.

For any $A\D$-module $M$ we define 
\[M^{\circ}:= \Hom_{A}(M,A).\]
The algebra $A$ acts naturally on $M^{\circ}$  by $(a\cdot \varphi)(m)=a\varphi(m)$, and we define an action of $\D$ on $M^{\circ}$ by
\[(\eta \cdot \varphi)(m) = -\varphi(\eta \cdot m)+\eta(\varphi(m))\]
for all $\eta \in \D$, $\varphi\in M^{\circ}$, and $m \in M$. These two actions are compatible in the sense defined above, so $M^{\circ}$ is indeed an $A\D$-module.
The contravariant functor $M \mapsto M^{\circ}$ provides a duality on $A\D$-Mod.

\subsection*{Chart Parameters}
Let $h\in A$ be a function and consider the corresponding chart for $X$ consisting of points where $h$ does not vanish: $N(h)=\{p\in X | h(p) \neq 0\}$.

\begin{defi}
\label{chartdef}
We shall say that $t_1, \ldots, t_s \in A$ are {\bf chart parameters} in the chart $N(h)$ provided that the following conditions are satisfied:
\begin{enumerate}
	\item $t_1, \ldots, t_s$ are algebraically independent over ${\bf k}$, so ${\bf k}[t_1, \ldots, t_s] \subset A$.
	\item Each element of $A$ is algebraic over ${\bf k}[t_1, \ldots, t_s]$.
	\item For each index $p$, the derivation $\frac{\del}{\del t_p} \in \Der({\bf k}[t_1,\ldots, t_s])$ extends to a derivation of the localized algebra $A_{(h)}$.
\end{enumerate}
\end{defi}

Note that part $2$ also implies that each element of $A_{(h)}$ is algebraic over ${\bf k}[t_1,\ldots, t_s]$ since algebraic elements are closed under taking inverses. Some further consequences of the definition are given below.

\begin{lemma}
Let $t_1, \ldots, t_s \in A$ be chart parameters in the chart $N(h)$. Then
\begin{enumerate}
	\item The extension of the derivation $\frac{\del}{\del t_p} \in \Der({\bf k}[t_1,\ldots, t_s])$ to the localized algebra $A_{(h)}$ is {\bf unique}.
	\item $\Der(A_{(h)}) = \displaystyle\bigoplus_{p=1}^{s} A_{(h)} \frac{\del}{\del t_p}$. 
\end{enumerate}
\end{lemma}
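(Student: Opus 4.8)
The plan is to pass to the field of fractions and exploit separability. Write $K = \mathrm{Frac}(A_{(h)})$; since $A_{(h)}$ is a localization of $A$ we have $K = \mathrm{Frac}(A)$, the function field of $X$. Set $F = {\bf k}(t_1, \ldots, t_s) \subset K$. By conditions $1$ and $2$ of Definition \ref{chartdef}, the elements $t_1, \ldots, t_s$ form a transcendence basis of $K$ over ${\bf k}$, so $K/F$ is algebraic; as $A$ is a finitely generated ${\bf k}$-algebra and $\mathrm{char}\,{\bf k} = 0$, the extension $K/F$ is in fact finite and separable. I will use two standard facts from the theory of derivations: first, every derivation of the domain $A_{(h)}$ extends uniquely to a derivation of $K$ via the quotient rule; second, for a separable algebraic extension $K/F$ one has $\Der_F(K) = 0$, i.e. every ${\bf k}$-derivation of $K$ that annihilates $F$ is zero. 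The latter is the crucial input, and it is exactly where characteristic zero enters.

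For part $1$, suppose $D_1, D_2 \in \Der(A_{(h)})$ both restrict to $\frac{\del}{\del t_p}$ on ${\bf k}[t_1,\ldots,t_s]$. Their difference $D = D_1 - D_2$ annihilates ${\bf k}[t_1,\ldots,t_s]$; extending $D$ to $K$ and applying the quotient rule shows that $D$ annihilates all of $F$, so $D \in \Der_F(K) = 0$. Hence $D_1 = D_2$ on $A_{(h)}$, giving uniqueness. Concretely, the vanishing of $\Der_F(K)$ can be seen by differentiating the minimal polynomial of any $a \in K$ over $F$: separability forces the derivative to be nonzero, which pins down $D(a)$ and forces it to vanish.

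For part $2$, the inclusion $\bigoplus_p A_{(h)} \frac{\del}{\del t_p} \subseteq \Der(A_{(h)})$ is immediate from condition $3$ together with the fact that $A_{(h)}$-multiples and sums of derivations are again derivations. Directness of the sum follows by evaluating a relation $\sum_p b_p \frac{\del}{\del t_p} = 0$ on $t_q$: since $\frac{\del}{\del t_p}(t_q) = \delta_{pq}$, we obtain $b_q = 0$ for every $q$. For the reverse inclusion, take any $D \in \Der(A_{(h)})$, set $b_p := D(t_p) \in A_{(h)}$, and form $D' = \sum_p b_p \frac{\del}{\del t_p}$. Then $D - D'$ is a derivation of $A_{(h)}$ vanishing on each $t_q$, hence on ${\bf k}[t_1,\ldots,t_s]$; extending to $K$ as before shows $D - D' \in \Der_F(K) = 0$, so $D = \sum_p b_p \frac{\del}{\del t_p}$ with coefficients in $A_{(h)}$.

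The main obstacle is establishing $\Der_F(K) = 0$; everything else is bookkeeping built on top of it. This rests on the separability of $K/F$, which holds because ${\bf k}$ has characteristic zero, and it is the single place where that hypothesis is genuinely needed. Once this vanishing is in hand, both the uniqueness in part $1$ and the coefficient extraction $b_p = D(t_p)$ in part $2$ follow by the same mechanism: the relevant difference of derivations is $F$-linear, hence zero.
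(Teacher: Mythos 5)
Your proof is correct and is essentially the paper's argument in field-theoretic packaging: the paper's uniqueness step is exactly the minimal-polynomial differentiation (valid because the derivative of the minimal polynomial is nonzero in characteristic zero) that underlies your appeal to $\Der_F(K)=0$, and your part 2 --- comparing $D$ with $\sum_p D(t_p)\frac{\del}{\del t_p}$ and invoking uniqueness --- is the paper's argument verbatim. The only difference is that you pass to the fraction field $K$ and cite the standard separability fact, while the paper carries out the same computation directly inside $A_{(h)}$ using condition 2 of the definition.
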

\begin{proof}
The uniqueness in the first claim follows from part $2$ of Definition~\ref{chartdef}. Indeed let $f$ be a non-zero element of $A_{(h)}$ and let 
$p_nT^n+\cdots + p_1T+p_0$ be its minimal polynomial with $p_i\in {\bf k}[t_1,\ldots, t_s]$. Let $\mu \in \Der(A_{(h)})$. Then 
\[\mu (f) = -\frac{\mu(p_n)f^n + \cdots + \mu(p_1)f+\mu(p_0)}{np_nf^{n-1}+ \cdots + p_1},\]
hence every derivation of $A_{(h)}$ is uniquely determined by its values on \break ${\bf k}[t_1, \ldots, t_s]$.

The second claim follows from part $3$ of Definition~\ref{chartdef}: For any $d\in \Der(A_{(h)})$, let $d'=\sum_{i=1}^s d(t_i)\frac{\del}{\del t_i}$. Then $d$ and $d'$ are both derivations of $A_{(h)}$ which are equal on ${\bf k}[t_1, \ldots, t_s] \subset A_{(h)}$, hence $d=d'$. Moreover, the expression of a derivation as an $A_{(h)}$-combination of $\{\frac{\del}{\del t_1}, \ldots, \frac{\del}{\del t_s}\}$ is unique; this is seen when applying such a combination to $t_1, \ldots , t_s$.
\end{proof}

Our prototypical example is the following. Let $X=\Sp^2$ and let $h=z$. Then $x,y$ are chart parameters outside the equator $z=0$: First ${\bf k}[x,y] \subset A$. Next we have $q(z)=0$ for $q(T)=T^2+(x^2+y^2-1)$ so $z$ is algebraic over ${\bf k}[x,y]$. Since $x,y,z$ generate the ring $A$, every element of $A$ is algebraic over ${\bf k}[x,y]$. For the third part we note that $0=\frac{\del}{\del x}(x^2+y^2+z^2-1) = 2x+2z\frac{\del z}{\del x}$, so for the extension of $\frac{\del }{\del x}$ to $A_{(z)}$ we have $\frac{\del z}{\del x}=-\frac{x}{z}$ which uniquely determines the derivation $\frac{\del}{\del x}$ on $A_{(z)}$. By symmetry the same goes for $\frac{\del}{\del y}$.

\subsection*{An Atlas for $X$}
Let $X$ be the zero locus of polynomials $g_1, \ldots, g_n \in {\bf k}[x_1, \ldots, x_m]$ such that $A={\bf k}[x_1, \ldots, x_m]/\langle g_1,\ldots, g_n \rangle$, and define \[J=\left( \frac{\del g_i}{\del x_j}\right) \in \mathrm{Mat}_{n\times m}(A).\]
Let $F$ be the field of fractions of $A$ and define $r:=\rank_F J$. This means that there exists some $r\times r$-minor which is a nonzero element of $A$. By the Nullstellensatz we have $\rank_F J = \max_{P\in X} \rank_{\bf k} J(P)$, and when $X$ is smooth, $\rank_{\bf k}J(P)$ is independent of the point $P$ (see \cite{S} Section 2.1.4).

We now consider $r\times r$-minors of $J$. For $\alpha \subset \{1,\ldots, n\}$ and $\beta \subset \{1,\ldots, m\}$ we write $J^{\alpha,\beta}$ for the corresponding $r\times r$-minor of $J$. For $h\in A$, let $N(h)=\{P\in X \; | \; h(P) \neq 0\}$.
\begin{lemma}
Let $X$ be smooth and let $r=\rank_F J$.  Then the following set of charts forms an atlas for $X$:
\[\{ N( \det J^{\alpha,\beta}) \; \big| \; |\alpha|=|\beta|=r, \; \det(J^{\alpha,\beta})\neq 0 \}.\]
\end{lemma}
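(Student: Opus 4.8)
The plan is to verify two things: that the listed charts cover $X$, and that on each such chart the defining minor furnishes an explicit system of chart parameters in the sense of Definition~\ref{chartdef}. For the covering, fix $P \in X$. Since $X$ is smooth, $\rank_{\bf k} J(P) = \rank_F J = r$, so some $r \times r$ minor $\det J^{\alpha,\beta}$ does not vanish at $P$. As a nonzero element of $A$ it indexes one of the charts, and $P \in N(\det J^{\alpha,\beta})$. Thus the charts cover $X$, so it remains only to supply chart parameters in each.

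Now fix a chart $N(h)$ with $h = \det J^{\alpha,\beta}$, and let $s = m - r$; by smoothness $\dim X = m - r = s$, so the transcendence degree of $F$ over ${\bf k}$ is $s$. I claim the $s$ coordinate functions $\{x_j : j \notin \beta\}$ are chart parameters. I would establish part $3$ of Definition~\ref{chartdef} first, by directly building the derivations. For each $q \notin \beta$ I seek $v = (v_1,\ldots,v_m) \in A_{(h)}^m$ with $v_{q'} = \delta_{qq'}$ for $q' \notin \beta$ and $Jv = 0$; the associated $D_q = \sum_j v_j \frac{\del}{\del x_j}$ then annihilates every $g_i$, hence preserves the ideal $\langle g_1,\ldots,g_n\rangle$ and descends to a derivation of $A_{(h)}$ restricting to $\frac{\del}{\del x_q}$ on ${\bf k}[x_j : j \notin \beta]$. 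Because $J^{\alpha,\beta}$ is invertible over $A_{(h)}$, the $r$ rows of $J$ indexed by $\alpha$ are linearly independent and hence span the row space over $F$; so it suffices to impose $(Jv)_i = 0$ for $i \in \alpha$. That is the square system $J^{\alpha,\beta}(v_j)_{j\in\beta} = -(J_{iq})_{i\in\alpha}$, which Cramer's rule solves uniquely with entries in $A_{(h)}$, giving part $3$.

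For part $1$, suppose $P(x_j : j\notin\beta) = 0$ in $A$ with $P$ a nonzero polynomial of least total degree. Applying the derivation $D_q$ just constructed gives $\frac{\del P}{\del x_q}(x_j : j\notin\beta) = 0$ in $A_{(h)}$; since $A$ is a domain and $h \neq 0$ the localization map is injective, so this relation already holds in $A$. In characteristic zero a nonconstant $P$ has some nonzero partial $\frac{\del P}{\del x_q}$ of strictly smaller degree, contradicting minimality, so $P$ is constant and (as $N(h) \neq \emptyset$) zero, a contradiction. Hence $\{x_j : j\notin\beta\}$ is algebraically independent. As there are exactly $s = \mathrm{trdeg}_{\bf k} F$ of them, they form a transcendence basis, which is precisely part $2$: every element of $A$ is algebraic over ${\bf k}[x_j : j\notin\beta]$. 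The main obstacle is the coordination at this last step between the purely linear-algebraic nonvanishing of the minor $J^{\alpha,\beta}$ and the geometric input $\dim X = m - r$; one can instead package parts $1$--$3$ simultaneously by observing that over $A_{(h)}$ the relations $dg_i = 0$ together with invertibility of $J^{\alpha,\beta}$ make $\Omega_{A_{(h)}/{\bf k}}$ free on $\{dx_j : j\notin\beta\}$, so that these coordinates constitute a separating transcendence basis of $F$ and dualize to the derivations above.
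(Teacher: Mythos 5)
Your proposal is correct, but it proves substantially more than the paper does for this lemma, and the extra portion is where the routes diverge. In the paper, ``forms an atlas'' means only that the charts cover $X$: the paper's entire proof is your first paragraph (smoothness gives $\rank_{\bf k} J(P)=\rank_F J=r$ at every $P$, so some $r\times r$ minor is nonzero at $P$, and that minor's chart contains $P$). The verification that $\{x_j : j\notin\beta\}$ are chart parameters in $N(\det J^{\alpha,\beta})$ is not part of this lemma at all -- it is the paper's \emph{next} lemma, which your remaining two paragraphs in effect re-prove. There, your construction of the derivations $D_q$ (solve $J^{\alpha,\beta}(v_j)_{j\in\beta}=-(J_{iq})_{i\in\alpha}$ by Cramer's rule, and note that the rows indexed by $\alpha$ span the row space over $F$) is essentially the paper's construction of its fundamental-solution derivations $\tau_j$ with $h\tau_j\in\Der(A)$; your minimal-degree argument for algebraic independence is a cosmetic variant of the paper's ``apply derivations to reduce $p$ to $1$.'' The genuine difference is at part 2 of Definition~\ref{chartdef}: you count transcendence degrees, using smoothness to get $\dim X=m-r=s$ so that your $s$ independent coordinates form a transcendence basis; the paper instead observes that any derivation of $F$ vanishing on ${\bf k}[x_j : j\notin\beta]$ vanishes identically (a homogeneous system with all free variables zero has only the trivial solution) and invokes Lang, Ch.~VIII, Prop.~5.2. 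The trade-off: the paper's argument for the chart-parameter lemma never uses smoothness -- it applies to any nonvanishing $r\times r$ minor -- and avoids dimension theory entirely, whereas your count is perhaps more familiar but imports the Jacobian-criterion identity $\dim X = m-\rank_F J$, a fact of comparable depth to the field-theoretic result it replaces.
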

\begin{proof}
Let $P\in X$. Since $X$ is smooth, $\rank_{\bf k} J(P)=\rank_F J = r$. Thus there exists a nonzero minor in $J(P)$: $\det(J^{\alpha,\beta}(P)) \neq 0$ for some $\alpha$ and $\beta$ of order $r$, so $P \in N( \det J^{\alpha,\beta})$. Thus the above atlas covers $X$.  
\end{proof}
From here on we shall fix this atlas for the variety $X$.

For the sphere $X=\Sp^2$, we shall sometimes write $(x,y,z)$ for $(x_1,x_2,x_3)$. We have $A=k[x,y,z] / \langle g \rangle$ with $g=x^2+y^2+z^2-1$, so $J=(2x \;\; 2y \;\; 2z)$. Here $\rank_F J =1$ and there are three nonzero minors: $2x,2y$, and $2z$. So in this case our charts are $N(x)$, $N(y)$, and $N(z)$ - each obtained by removing a great circle from the sphere.

As described in \cite{BF3}, the matrix $J$ also grants an explicit description of the vector fields on $X$: for $f_i \in A$,  the combination $\sum_{i=1}^m f_i\frac{\del}{\del x_i}$ is a vector field on $X$ if and only if the vector $(f_1,\ldots, f_m)$ belongs to the kernel of $J$. 

\begin{lemma}
Let $r=\rank_F J$ and let $J^{\alpha,\beta}$ be a minor of size $r$ with $h=\det J^{\alpha,\beta} \neq 0$. Then $\{x_i \; |\; i\not\in \beta\}$ are chart parameters in the chart $N(h)$.
\end{lemma}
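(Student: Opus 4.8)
The plan is to verify the three defining conditions of chart parameters (Definition~\ref{chartdef}) for the set $\{x_i : i \notin \beta\}$, working throughout in the fraction field $F$ of $A$ and its module of K\"ahler differentials $\Omega_{F/{\bf k}}$. The governing observation is that the relations $g_a = 0$ in $A$ yield relations $dg_a = \sum_j J_{a,j}\, dx_j = 0$ among the differentials, so that $dx_1, \ldots, dx_m$ span $\Omega_{F/{\bf k}}$ subject to exactly the row relations of $J$. Since $\rank_F J = r$, this gives $\dim_F \Omega_{F/{\bf k}} = m - r =: s$, and because we are in characteristic zero, $\mathrm{trdeg}_{\bf k} F = s$ as well.

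First, for algebraic independence (condition $1$), I would show that $\{dx_i : i\notin\beta\}$ is a basis of $\Omega_{F/{\bf k}}$. The rows indexed by $\alpha$ give $\sum_{j\in\beta} J_{a,j}\, dx_j = -\sum_{i\notin\beta} J_{a,i}\, dx_i$, and since the matrix $J^{\alpha,\beta}=(J_{a,j})_{a\in\alpha, j\in\beta}$ is invertible over $A_{(h)}$, each $dx_j$ with $j\in\beta$ lies in the span of $\{dx_i : i\notin\beta\}$. Hence these $s$ differentials span the $s$-dimensional space $\Omega_{F/{\bf k}}$, so they are linearly independent; by the characteristic-zero Jacobian criterion the corresponding $x_i$ ($i\notin\beta$) are algebraically independent over ${\bf k}$. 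Condition $2$ then follows immediately: a set of $s=\mathrm{trdeg}_{\bf k}F$ algebraically independent elements is a transcendence basis, so every element of $F$ --- in particular every element of $A$ --- is algebraic over ${\bf k}(x_i : i\notin\beta)$, hence over the polynomial ring ${\bf k}[x_i : i\notin\beta]$ after clearing denominators.

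The substantive step is condition $3$, the extension of $\frac{\del}{\del x_i}$ (for fixed $i\notin\beta$) to a derivation of $A_{(h)}$. I would construct this extension explicitly as a vector field. Using the $\alpha$-rows, solve the square linear system $J^{\alpha,\beta}\, (w_j)_{j\in\beta} = -(J_{a,i})_{a\in\alpha}$; since $\det J^{\alpha,\beta}=h$ is a unit in $A_{(h)}$, Cramer's rule gives $w_j\in A_{(h)}$. Set $w_i = 1$, $w_\ell = 0$ for the remaining $\ell\notin\beta$, and let $D = \sum_{\ell=1}^m w_\ell\,\frac{\del}{\del x_\ell}$. By construction $D$ restricts to $\frac{\del}{\del x_i}$ on ${\bf k}[x_\ell : \ell\notin\beta]$, so it remains to check that $D$ is an honest derivation of $A_{(h)}$, i.e.\ that $(w_\ell)\in\ker J$ per the kernel description of vector fields recalled above.

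The main obstacle is exactly this last point: the construction only forced the $\alpha$-rows of $Jw$ to vanish, and I must show all rows vanish. This is where $\rank_F J = r$ is used: since the $r$ rows indexed by $\alpha$ are linearly independent over $F$ (their $\beta$-columns already form an invertible matrix) and the row rank is $r$, every other row of $J$ is an $F$-linear combination of the $\alpha$-rows; a vector annihilated by the $\alpha$-rows is therefore annihilated by all of $J$. Hence $Jw = 0$, so $D$ preserves the defining ideal and descends to $A_{(h)}$, giving the required extension. (Alternatively, one can obtain condition $3$ by noting that $F$ is finite separable over ${\bf k}(x_i:i\notin\beta)$, so $\frac{\del}{\del x_i}$ extends uniquely to a derivation of $F$, and the same Cramer computation shows this extension maps each $x_j$, hence all of $A_{(h)}$, back into $A_{(h)}$.)
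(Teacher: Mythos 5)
Your proof is correct, and on condition 3 it essentially reproduces the paper's argument: the paper also solves the linear system $Jf=0$ over $F$ by keeping only the rows $\alpha$, taking $\{f_i : i\in\beta\}$ as leading variables and $\{f_i : i\notin\beta\}$ as free, and writing down the fundamental solutions $\tau_j = \frac{\del}{\del x_j} + \sum_{i\in\beta} f_{i,j}\frac{\del}{\del x_i}$; your Cramer's-rule vector $w$ is exactly such a fundamental solution, your observation that only $h$ occurs in denominators matches the paper's conclusion $h\tau_j \in \Der_{\bf k}(A)$, and your rank argument (the $\alpha$-rows span the row space, so killing them kills all of $J$) is the same justification that these are genuine derivations. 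Where you genuinely diverge is in conditions 1 and 2. The paper stays entirely on the derivation side and extracts everything from the $\tau_j$ themselves: algebraic independence is proved by the elementary trick of applying a string of $\tau_j$'s to a putative polynomial relation among the $x_i$, $i\notin\beta$, until it becomes the nonzero constant $1$ (a contradiction), and algebraicity of $F$ over ${\bf k}(x_i : i\notin\beta)$ follows from the fact that a derivation of $F$ vanishing on these variables has all free variables zero, hence vanishes identically, together with a citation of Lang (Ch.~VIII, Prop.~5.2). You instead work on the dual side with K\"ahler differentials: the conormal sequence identifies $\Omega_{F/{\bf k}}$ with $F^m$ modulo the row space of $J$, so $\dim_F \Omega_{F/{\bf k}} = m-r = \mathrm{trdeg}_{\bf k}\,F$ in characteristic zero, and conditions 1 and 2 then fall out of a dimension count plus the Jacobian criterion for algebraic independence. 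The two routes are dual to one another (kernel of $J$ acting on derivations versus cokernel of $J$ acting on differentials); yours buys conceptual economy by importing standard commutative-algebra facts, getting conditions 1--2 essentially for free, while the paper's is more self-contained, needing only one external citation and deriving all three conditions from the single construction of the $\tau_j$ --- note in particular that the paper proves condition 3 \emph{first} and uses the resulting derivations to establish 1 and 2, whereas your conditions 1 and 2 are independent of the construction.
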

\begin{proof}
Consider $\Der_{\bf k} F$. It is easy to see that for any $s\in F$ and $\eta\in \Der(A)$ we have $s\eta \in \Der_{\bf k} F$ and conversely, for any $\mu \in \Der_{\bf k}F$ there exists $q\in A$ such that $q\mu \in \Der_{\bf k}(A)$. Then derivations of $F$ can be written as $\sum_{i=1}^m f_i\frac{\del}{\del x_i}$ where $(f_1,\ldots, f_m)$ are solutions over $F$ of a system of linear homogeneous equations with matrix $J$. Since $\rank_{F}J=r$, we can keep only the rows $\alpha$ and choose variables $\{f_i \; |\; i\in \beta\}$ as leading, and $\{f_i \; | \; i \not\in \beta\}$ as free. Writing down the fundamental solutions, we get derivations $\tau_j = \frac{\del}{\del x_j} + \sum_{i \in \beta} f_{i,j} \frac{\del}{\del x_i} \in \Der_{\bf k} F$ for each $j\not\in \beta$. Note that only $h$ may appear in the denominators of $f_{i,j}$, hence $h\tau_j \in \Der_{\bf k}(A)$ for all $j \not\in \beta$. This implies that $\{x_i \; |\; i\not\in \beta\}$ are algebraically independent. Indeed if $p$ is a polynomial in $\{x_i \; |\; i\not\in \beta\}$ which vanishes in $F$, we can apply a sequence of derivations $\tau_j$ which brings $p$ to $1$ and obtain a contradiction $1=0$. If all free variables have zero values, the solution of a homogeneous system is trivial. This implies that every derivation of $F$ which is zero on ${\bf k}[x_i \; | \; i\not\in \beta]$, is zero on $F$.
 By~\cite{Lang} Chapter VIII, Prop. 5.2, $F$ is algebraic over ${\bf k}(x_i \; | \; i\not\in \beta)$.
\end{proof}
\begin{rmk}
In the above lemma, we may assume without loss of generality that $\{i \; | \; i \not\in \beta\}=\{1,\ldots,s\}$
and $\{i \; | \; i \in \beta\} = \{s+1,\ldots , n\}$.

If we treat the chart parameters $t_i=x_1, \ldots , t_s=x_s$ as independent variables and $x_{s+1}, \ldots, x_n$ as dependent we can write a derivation
\[\sum_{i \not\in \beta}f_i \frac{\del}{\del x_i}+\sum_{j \in \beta}f_j \frac{\del}{\del x_j}\]
 simply as $\sum_{i=1}^s f_i \frac{\del}{\del t_i}$ with understanding that for $j\in \beta$ we have \[\sum_{i=1}^s f_i \frac{\del x_j}{\del t_i} = \sum_{i=1}^s f_i \tau_i(x_j).\]
\end{rmk}

\subsection*{Embedding of Vector Fields}
The embedding $A \subset A_{(h)}$ gives a corresponding embedding of polynomial vector fields: 
\[\mathrm{Vect}(X) \simeq \Der(A) \subset \Der(A_{(h)})=\bigoplus_{p=1}^{n} A_{(h)} \frac{\del}{\del t_p},\]
In other words, a polynomial vector field on $X$ can be written as
$\sum_{i=1}^{n}f_i\frac{\del}{\del t_i} \in \Der(A_{(h)})$ for some unique $f_i \in A_{(h)}$.

\subsection*{Valuation}
For each point $P\in N(h)$, define $\nu_P: A_{(h)}\setminus\{0\} \ra \bb{N}$ by
\[\nu_P(f)=\min \left\{\sum_{i=s}^n \alpha_i\; \Bigg| \left(\prod_{i=1}^{s}\left(\tfrac{\del}{\del t_i}\right)^{\alpha_i} f \right)(P) \neq 0\; \right\}.\]
Then $\nu_P$ is well-defined by the following lemma.

\begin{lemma}
We have $\nu_P(f) < \infty$ for each nonzero $f\in A_{(h)}$.
\end{lemma}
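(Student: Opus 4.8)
My plan is to show the contrapositive of the obstruction: a nonzero $f\in A_{(h)}$ cannot have \emph{all} of its partial derivatives vanishing at $P$. Indeed, $\nu_P(f)$ fails to be defined (``$=\infty$'') precisely when $\left(\prod_i(\tfrac{\del}{\del t_i})^{\alpha_i}f\right)(P)=0$ for every multi-index $\alpha$, so it suffices to prove that this forces $f=0$. I would work locally at $P$: since $X$ is smooth and the $t_i$ form a transcendence basis of $F$ over ${\bf k}$, the local ring $\ca{O}_{X,P}$ is regular of dimension $s$, with maximal ideal $\fr{m}_P$. Set $\tilde{t}_i=t_i-t_i(P)$; because $f\in A_{(h)}$ and $P\in N(h)$, we may regard $f$ as a nonzero element of $\ca{O}_{X,P}\subset F$.

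The first key step is to check that $\tilde{t}_1,\ldots,\tilde{t}_s$ form a regular system of parameters at $P$ (this is the content of the commented-out lemma in the excerpt). I would argue at the level of the cotangent space $\fr{m}_P/\fr{m}_P^2$, whose dimension is $s$ by regularity. Each derivation $\frac{\del}{\del t_i}$ sends $\fr{m}_P^2$ into $\fr{m}_P$, so $g\mapsto(\frac{\del g}{\del t_i})(P)$ descends to a linear functional on $\fr{m}_P/\fr{m}_P^2$; since $\frac{\del\tilde t_j}{\del t_i}=\delta_{ij}$, these $s$ functionals are dual to the $\tilde t_j$, forcing the $\tilde t_j$ to be linearly independent in $\fr{m}_P/\fr{m}_P^2$, hence a basis. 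Consequently $\widehat{\ca{O}}_{X,P}\cong{\bf k}[[\tilde t_1,\ldots,\tilde t_s]]$.

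The second step is to identify the extension of $\frac{\del}{\del t_i}$ to the completion with formal differentiation $\del/\del\tilde t_i$. A derivation lowers $\fr{m}_P$-adic order by at most one (by the Leibniz rule applied to products of elements of $\fr{m}_P$), so it is continuous and extends to $\widehat{\ca{O}}_{X,P}$; a continuous derivation of a power series ring is determined by its values on the generators, and both $\frac{\del}{\del t_i}$ and $\del/\del\tilde t_i$ send $\tilde t_j\mapsto\delta_{ij}$, so they coincide. Writing the Taylor expansion $f=\sum_\alpha c_\alpha\,\tilde t^{\,\alpha}$ in $\widehat{\ca{O}}_{X,P}$ and applying these derivations, I get, since $\mathrm{char}\,{\bf k}=0$,
\[ \left(\prod_i\left(\tfrac{\del}{\del t_i}\right)^{\alpha_i}f\right)(P)=\alpha!\,c_\alpha . \]
If every partial derivative of $f$ vanishes at $P$, then every $c_\alpha=0$, so $f=0$ in $\widehat{\ca{O}}_{X,P}$; but $\bigcap_n\fr{m}_P^{\,n}=0$ by Krull's intersection theorem makes $\ca{O}_{X,P}\hookrightarrow\widehat{\ca{O}}_{X,P}$ injective, whence $f=0$ in $F$, a contradiction. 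Therefore some derivative is nonzero at $P$ and $\nu_P(f)<\infty$; in fact $\nu_P(f)$ equals the $\fr{m}_P$-adic order of $f$.

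The main obstacle I anticipate is the passage from the purely algebraic chart-parameter data to the local analytic picture at $P$: namely verifying that $\tilde t_1,\ldots,\tilde t_s$ is a regular system of parameters and that the abstractly-defined derivations $\frac{\del}{\del t_i}$ become honest formal partials on the completion. One can try to sidestep completions using only the $\fr{m}_P$-adic filtration: the Leibniz rule immediately gives that $f\in\fr{m}_P^{\,d}$ implies all derivatives of order $<d$ vanish at $P$, and the reverse implication (all derivatives of order $<d$ vanishing $\Rightarrow f\in\fr{m}_P^{\,d}$) combined with $\bigcap_n\fr{m}_P^{\,n}=0$ would finish the argument; but that reverse implication is exactly the Taylor-type statement above and is where the real work lies.
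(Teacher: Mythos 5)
Your argument is correct, but it takes a genuinely different route from the paper's. The paper never leaves the framework of Definition~\ref{chartdef}: since $f$ is algebraic over ${\bf k}[t_1,\ldots,t_s]$, it has a minimal polynomial $\sum_{i=0}^N a_iT^i$ with $a_0\neq 0$, and differentiating the relation $\sum_i a_if^i=0$ shows that if all derivatives of $f$ of order at most $\nu_P(a_0)$ vanished at $P$, then all derivatives of $a_0$ of order at most $\nu_P(a_0)$ would vanish at $P$ as well; this is a contradiction because $a_0$ is an honest polynomial in $t_1,\ldots,t_s$, so $\nu_P(a_0)\leq \deg a_0<\infty$ in characteristic zero. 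Thus $\nu_P(f)\leq \nu_P(a_0)$, with no geometric input: the argument works at every point of every chart, even on a singular variety, as long as chart parameters exist. Your proof instead passes through the local ring at $P$, and its first step --- that $\ca{O}_{X,P}$ is regular of dimension $s$, so that $\tilde t_1,\ldots,\tilde t_s$ form a regular system of parameters --- genuinely uses smoothness of $X$ at $P$. This is admissible here, since the paper fixes its atlas only for smooth $X$, but it is an extra hypothesis that the paper's own proof avoids (note that the authors drafted and then commented out precisely the ``local parameters'' lemma you re-prove). Granting smoothness, your chain of steps is sound: the cotangent-space duality gives linear independence of the $\tilde t_j$ in $\fr{m}_P/\fr{m}_P^2$, regularity upgrades this to a basis, Cohen's structure theorem identifies $\widehat{\ca{O}}_{X,P}$ with ${\bf k}[[\tilde t_1,\ldots,\tilde t_s]]$, the continuity argument correctly identifies the extended derivations with the formal partials, and Krull's intersection theorem closes the loop; the ``reverse Taylor implication'' you flag as the main obstacle is exactly what the completion buys you. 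What your approach gains in exchange for the heavier machinery is a sharper conclusion: $\nu_P(f)$ equals the $\fr{m}_P$-adic order of vanishing of $f$ at $P$, a fact that the paper's inequality $\nu_P(f)\leq\nu_P(a_0)$ does not provide.
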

\begin{proof}
First of all, $\nu_P$ is finite on ${\bf k}[t_1,\ldots, t_s] \subset A_{(h)}$ as it is bounded by the degree function. Now any $f\in A_{(h)}$ is by definition algebraic over ${\bf k}[t_1,\ldots, t_s]$ so we can consider its minimal polynomial $m(X)=\sum_{i=0}^{N}a_{i}X^k$, where $a_i\in {\bf k}[t_1,\ldots, t_s]$ and $m(f)=0$ and $a_0\neq 0$. We claim that $\nu_P(f) \leq \nu_P(a_0)$. To prove this, suppose for contradiction that $\left(\prod_{i=1}^{s}(\tfrac{\del}{\del t_i})^{\alpha_i} f \right)(P)=0$ for all $\alpha_1+\cdots+\alpha_s \leq \nu_P(a_0)$. Pick $\alpha_i$'s such that
 $\left(\prod_{i=1}^{s}(\tfrac{\del}{\del t_i})^{\alpha_i} a_0 \right)(P)\neq 0$. Then for any derivation $d=\frac{\del}{\del t_p}$ we have
\[d(a_0)=-\left( \sum_{i=1}^N d(a_i)f^i + a_iif^{i-1}d(f) \right).\]
But by assumption, both $f(P)=0$ and $d(f)(P)=0$ so the right hand side is zero at $P$, hence also $d(a_0)(P)=0$. Iterating this $\nu_P(a_0)$ times we get
$\left(\prod_{i=1}^{s}(\tfrac{\del}{\del t_i})^{\alpha_i} a_0 \right)(P)= 0$, a contradiction.
\end{proof}

Note that the above proof uses our assumption that $\mathrm{char} \; {\bf k} =0$.

\section{A class of $A\D$-modules}
Let $s:=\dim X$. Let $\{t_1, \ldots, t_s\}$ be chart parameters in a chart $N(h)$ and let $A_{(h)}$ be the localization of the algebra $A$ at $h$. 
For any $\fr{gl}_{s}$-module $U$ we consider the space $A_{(h)} \otimes_{\bf k} U$. 
The algebra $A$ acts on this space by multiplication on the left factor.

The proof of the following lemma is straightforward and we leave it to the reader.
\begin{lemma}
Let $U$ be a finite-dimensional $\fr{gl}_s$-module. Consider the vector fields $\D$ as embedded in $\Der(A_{(h)})=\bigoplus_{i=1}^{s}A_{(h)}\frac{\del}{\del t_{i}}$. Define an action of $\D$ on $A_{(h)} \otimes U$ by
\[\tag{1} \left(\sum_{i=1}^{s}f_i \frac{\del}{\del t_i}\right)\cdot (g\otimes u):=\sum_{i=1}^{s}f_i \frac{\del g}{\del t_i}\otimes u + \sum_{p=1}^{s}\sum_{i=1}^{s}g \frac{\del f_i}{\del t_p}\otimes (E_{p,i}\cdot u).\]
Here $E_{p,i}$ is a standard basis element of $\fr{gl}_s$, $g \in A_{(h)}$, and $u\in U$. This equips $A_{(h)} \otimes U$ with the structure of an $A\D$-module.
\end{lemma}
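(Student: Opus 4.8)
The plan is to verify the three defining properties of an $A\D$-module for the operation (1): that $A$ acts (this is the given multiplication on the left factor), that $\D$ acts as a Lie algebra, and that the two actions satisfy the compatibility relation $\eta\cdot(a\cdot m)=\eta(a)\cdot m + a\cdot(\eta\cdot m)$. First I would note that the formula is unambiguous: since $\{\frac{\del}{\del t_1},\ldots,\frac{\del}{\del t_s}\}$ is an $A_{(h)}$-basis of $\Der(A_{(h)})$ by the second lemma of the previous section, the coefficients $f_i\in A_{(h)}$ of a vector field are uniquely determined. In fact (1) makes sense verbatim for any $\sum_i f_i\frac{\del}{\del t_i}\in\Der(A_{(h)})$, so I would prove that it defines a representation of the whole Lie algebra $\Der(A_{(h)})$ on $A_{(h)}\otimes U$ and then restrict the action to the subalgebra $\D\subset\Der(A_{(h)})$.

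For compatibility, write $\eta=\sum_i f_i\frac{\del}{\del t_i}$ and compute $\eta\cdot(a\cdot(g\otimes u))=\eta\cdot((ag)\otimes u)$ using (1). Expanding each $\frac{\del}{\del t_i}(ag)$ by the Leibniz rule splits the first sum in (1) into a term $\sum_i f_i\frac{\del a}{\del t_i}\,g\otimes u=\eta(a)\cdot(g\otimes u)$ and a term $a\cdot\sum_i f_i\frac{\del g}{\del t_i}\otimes u$, while the second sum in (1) simply acquires a factor $a$. Collecting terms yields $\eta(a)\cdot(g\otimes u)+a\cdot(\eta\cdot(g\otimes u))$, which is the required identity. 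This step is routine precisely because the $A$-action only touches the left tensor factor and each $\frac{\del}{\del t_i}$ is a derivation of $A_{(h)}$.

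The substance of the lemma is the Lie-algebra axiom $[\eta,\eta']\cdot m=\eta\cdot(\eta'\cdot m)-\eta'\cdot(\eta\cdot m)$. I would take $\eta=\sum_i f_i\frac{\del}{\del t_i}$ and $\eta'=\sum_j g_j\frac{\del}{\del t_j}$, recall that their bracket in $\Der(A_{(h)})$ is $[\eta,\eta']=\sum_k h_k\frac{\del}{\del t_k}$ with $h_k=\sum_i\bigl(f_i\frac{\del g_k}{\del t_i}-g_i\frac{\del f_k}{\del t_i}\bigr)$, and then expand $\eta\cdot(\eta'\cdot(w\otimes u))-\eta'\cdot(\eta\cdot(w\otimes u))$ by applying (1) twice. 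The resulting terms organize themselves by how many factors of $\fr{gl}_s$ act on $u$. The terms carrying a second-order derivative of $w$ and no $\fr{gl}_s$-action cancel because the $\frac{\del}{\del t_i}$ commute; the terms with a single $\fr{gl}_s$-action and a first-order derivative of $w$ cancel in pairs between the two orderings; and what survives with $w$ differentiated once and no $\fr{gl}_s$-action is exactly $\sum_k h_k\frac{\del w}{\del t_k}\otimes u$, the first summand of $[\eta,\eta']\cdot(w\otimes u)$.

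The only delicate matching is for the terms in which $u$ is acted on once with $w$ undifferentiated. These arise from two sources: terms where a derivative falls twice on the coefficients $f_i,g_j$, and terms where two basis elements $E_{p,i},E_{q,j}$ act in succession. For the latter, the difference between the two orderings is governed by the commutator, and inserting the $\fr{gl}_s$ relation $[E_{p,i},E_{q,j}]=\delta_{iq}E_{p,j}-\delta_{pj}E_{q,i}$ produces precisely the first-order-coefficient-derivative terms that appear in the $\fr{gl}_s$-part $\sum_{p,k} w\frac{\del h_k}{\del t_p}\otimes E_{p,k}u$ of $[\eta,\eta']\cdot(w\otimes u)$. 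The main obstacle is therefore pure bookkeeping: one expands $\frac{\del h_k}{\del t_p}$ by Leibniz and checks that each of its four groups of terms is reproduced, using commutativity of the $\frac{\del}{\del t_i}$ to identify mixed second derivatives and the $\fr{gl}_s$ commutation relation to convert the double-$E$ contribution into single-$E$ terms. Once this index matching is carried out, all three axioms hold and $A_{(h)}\otimes U$ is an $A\D$-module.
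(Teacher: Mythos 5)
Your proposal is correct, and since the paper explicitly leaves this proof to the reader as ``straightforward,'' your direct verification is exactly the intended argument: well-definedness via the $A_{(h)}$-basis $\{\frac{\del}{\del t_i}\}$ of $\Der(A_{(h)})$, the Leibniz check for compatibility, and the expansion of $\eta\cdot(\eta'\cdot m)-\eta'\cdot(\eta\cdot m)$ against $[\eta,\eta']\cdot m$. I checked your cancellation bookkeeping and it holds up: the second-derivative-of-$w$ terms cancel by symmetry, the mixed terms (one $E$, one derivative of $w$) cancel pairwise between the two orderings, and the terms $\sum_{q,i,p,j} w(\del_p g_j)(\del_q f_i)\otimes[E_{q,i},E_{p,j}]u$ together with the second-derivative-of-coefficient terms reproduce exactly $\sum_{p,k} w(\del_p h_k)\otimes E_{p,k}u$ after expanding $\del_p h_k$ by Leibniz, so the lemma follows.
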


The algebra $A_{(h)}$ has a natural doubly infinite filtration with respect to powers of $h$:
\[\cdots \subset h^{k+1}A \subset h^kA\subset h^{k-1}A \subset \cdots.\]
For each $a\in A_{(h)}$ we define its {\bf degree} by
\[\deg a := \max \{k\in \bb{Z}\; | \; a \in h^{k}A\}.\]

We extend our notion of degree to elements of $A_{(h)} \otimes U$ in the natural way: for nonzero $m\in A_{(h)} \otimes U$ we define
\[\deg m := \max \{k\in \bb{Z}\; | \; m \in h^{k}A \otimes U\}.\]
If $M \subset A_{(h)} \otimes U$ is a nonzero submodule, we define
\[\deg(M):= \inf \{\deg m | \; m \in M,\; m\neq 0\}.\]

Let $M \subset A_{(h)} \otimes U$ be a submodule. We shall call $M$ {\bf bounded} if  $\deg(M)$ is finite. It is easy to see that if $M$ is finitely generated over $A$ then it is bounded. Conversely, every bounded submodule in $A_{(h)}\otimes U$ is finitely generated over $A$. Indeed, as an $A$-module $h^kA\otimes U$ is isomorphic to $A\otimes U$ and since $A$ is noetherian, every submodule in a finitely generated $A$-module is finitely generated.

 On the other hand, we shall call $M$ {\bf dense} if $M \supset h^{k}A\otimes U$ for some $k$.
Note that $M$ is both dense and bounded when there exist two integers $K\geq k$ such that
\[ h^{K}A\otimes U \subset M \subset h^{k}A\otimes U.\]

\section{The Sphere}
From here on we shall focus on the case when $X$ is the sphere $\Sp^2$. Some results still hold in a more general setting.

Let $X=\Sp^{2} \subset \bb{A}^{3}$. With notation as above we have $A={\bf k}[x_1,x_2,x_3] / \langle x_1^2 + x_2^2 + x_3^2-1 \rangle$. However, we shall sometimes write $x,y,z$ for $x_1,x_2,x_3$.
Let $\Delta_{ij} = x_{j}\frac{\del}{\del x_i} - x_{i}\frac{\del}{\del x_j}$. Then it is easy to check that $\D$ is generated by $\Delta_{12}$, $\Delta_{23}$, and $\Delta_{31}$ as an $A$-module, and that these generators satisfy
\[[\Delta_{12},\Delta_{23}] = \Delta_{31}, \qquad[\Delta_{23},\Delta_{31}] = \Delta_{12}, \qquad[\Delta_{31},\Delta_{12}] = \Delta_{23}.\] However, $\D$ is not a free $A$-module since we have the relation $x_{1}\Delta_{23}+x_{2}\Delta_{31}+x_{3}\Delta_{12} =0$.

In the chart $N(z)$ with chart parameters $\{x,y\}$ these generating vector fields are expressed as
\[\Delta_{12}=y\frac{\del}{\del x} - x\frac{\del}{\del y}, \qquad \Delta_{23} = z\frac{\del}{\del y}, \qquad \Delta_{31}=-z\frac{\del}{\del x}.\]

\section{Explicit construction of modules}
The Lie algebra $\fr{sl}_2$ acts naturally on ${\bf k}[X,Y]$ by
\[E_{1,2}\cdot f = X\frac{\del f}{\del Y}, \quad E_{2,1}\cdot f = Y\frac{\del f}{\del X}, \quad (E_{1,1}-E_{2,2})\cdot f=X\frac{\del f}{\del X}-Y\frac{\del f}{\del Y}.\]
For any $\alpha\in{\bf k}$ we may extend this to a $\fr{gl}_2$-module ${\bf k}_{\alpha}[X,Y]$ by requiring $(E_{1,1}+E_{2,2})\cdot f=\alpha f$.

Homogeneous components are preserved by this action, and ${\bf k}_{\alpha}[X,Y] = \bigoplus_{m\geq 0} U_m^{\alpha}$ where $U_m^{\alpha}$ is the homogeneous component of degree $m$.
Explicitly, for $m\in \bb{N}$ and for $0 \leq i \leq m$, we define $v_{i}^{m}:= {m \choose i} X^iY^{m-i}$ (we shall drop the upper $m$ when it is understood by the context).
Then $U_{m}^{\alpha}=\mathrm{span}\{v_0^m, \ldots, v_m^m\}$ is the homogeneous component of degree $m$, and the action on these basis elements is given by

\[E_{1,1} \cdot v_i = \frac{1}{2}(\alpha+m-2i)v_i, \qquad \qquad E_{1,2} \cdot v_i = (m-i+1)v_{i-1},\]
\[E_{2,1} \cdot v_i = (i+1)v_{i+1}, \qquad \qquad E_{2,2} \cdot v_i = \frac{1}{2}(\alpha-m+2i)v_{i}.\]
In particular this implies that
\[(E_{1,1}-E_{2,2})\cdot v_i = (m-2i)v_i, \qquad \qquad (E_{1,1}+E_{2,2})\cdot v_i = \alpha v_i.\]
Here $v_{-1}$ and $v_{m+1}$ are $0$ by definition. When restricted to $\fr{sl}_2$, $U_{m}^{\alpha}$ is the unique simple module of dimension $m+1$. In particular, any finite-dimensional simple $\fr{gl}_2$-module is isomorphic to $U_{m}^{\alpha}$ for a unique $\alpha$ and $m$.

\subsection*{Modules of rank $1$}
We first consider the case $m=0$. Here $U_{0}^{\alpha}$ is one dimensional and the identity acts by $\alpha$.
We consider the chart $N(z)$ on $\Sp^2$ with chart parameters $\{x,y\}$.
\begin{prop}
The module $A_{(z)} \otimes U_{0}^{\alpha}$ contains a bounded $A\D$-submodule if and only if $\alpha \in 2\bb{Z}$.
\end{prop}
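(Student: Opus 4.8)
The plan is to exploit the fact that $U_0^\alpha$ is one-dimensional, so that, writing $u$ for a basis vector, we may identify $A_{(z)}\otimes U_0^\alpha$ with $A_{(z)}$ and compute the generators explicitly. From $E_{1,1}\cdot u = E_{2,2}\cdot u = \tfrac{\alpha}{2}u$ and $E_{1,2}\cdot u = E_{2,1}\cdot u = 0$, formula $(1)$ collapses to
\[
\eta\cdot(g\otimes u) = \bigl(\eta g + \tfrac{\alpha}{2}(\operatorname{div}\eta)\,g\bigr)\otimes u,
\]
where $\operatorname{div}\bigl(\sum_i f_i\,\partial/\partial t_i\bigr) = \sum_i \partial f_i/\partial t_i$ is the flat divergence in the chart coordinates $(t_1,t_2)=(x,y)$. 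Using $\partial z/\partial x = -x/z$ and $\partial z/\partial y=-y/z$, I would record $\operatorname{div}\Delta_{12}=0$, $\operatorname{div}\Delta_{23}=-y/z$, and $\operatorname{div}\Delta_{31}=x/z$, which yields closed formulas for $\Delta_{12}\cdot f$, $\Delta_{23}\cdot f$, $\Delta_{31}\cdot f$ for $f\in A_{(z)}$.

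For the forward direction (existence when $\alpha=2n\in 2\bb{Z}$), I would simply exhibit the element $z^{-n}\otimes u$ and check by direct substitution that all three generators annihilate it; geometrically this is the rotation-invariant density on the sphere. Because $\D$ is generated as an $A$-module by $\Delta_{12},\Delta_{23},\Delta_{31}$ and the action obeys the Leibniz rule, it follows at once that $M:=A\cdot(z^{-n}\otimes u)$ is a $\D$-submodule. It is cyclic over $A$, hence finitely generated, hence bounded, with $\deg M=-n$.

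For the converse, which I expect to be the crux, I would pass to the associated graded of the $z$-adic filtration, using $\mathrm{gr}_k=z^kA/z^{k+1}A\cong B:=\mathbf{k}[x,y]/(x^2+y^2-1)$, the coordinate ring of the equator circle; this is an integral domain since $x^2+y^2-1$ is irreducible over $\mathbf{k}$. The key computation is that of the leading symbols: tracking how $\partial/\partial x$ and $\partial/\partial y$ interact with powers of $z$, one finds that $\Delta_{23}$ and $\Delta_{31}$ each lower $z$-degree by one and act on $\mathrm{gr}_k$ by multiplication by $-(k+\tfrac{\alpha}{2})\bar y$ and $(k+\tfrac{\alpha}{2})\bar x$ respectively, while $\Delta_{12}$ preserves degree and acts by the rotation derivation $\bar y\,\partial_x-\bar x\,\partial_y$. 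Now let $M$ be a nonzero bounded submodule with $d=\deg M$, and pick $m\in M$ with $\deg m=d$ and nonzero leading symbol $\bar g\in B$. Since $\Delta_{23}\cdot m$ again lies in $M\subseteq z^dA\otimes U$, its degree is $\ge d$, so its prospective degree-$(d-1)$ symbol $-(d+\tfrac{\alpha}{2})\bar y\,\bar g$ must vanish in $B$. As $B$ is a domain and $\bar y,\bar g\neq 0$, this forces $d+\tfrac{\alpha}{2}=0$, i.e. $\alpha=-2d\in 2\bb{Z}$; note this value is exactly consistent with the $\deg M=-\alpha/2$ obtained in the forward construction.

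The main obstacle is getting the leading-symbol formulas exactly right. In particular, the operators $\partial/\partial x,\partial/\partial y$ applied to $z^kg$ produce spurious $z^{k-2}$-terms, and I must verify that the multiplication-by-$z$ in $z\,\partial/\partial y$ (and the density correction $-\tfrac{\alpha}{2}\,y/z$) recombine so that $\Delta_{23}$ and $\Delta_{31}$ genuinely shift degree by exactly one with the stated symbols, and that $\Delta_{12}$ preserves degree because $\Delta_{12}(z)=0$. Once this bookkeeping is settled, the domain property of $B$ delivers the parity condition on $\alpha$ immediately.
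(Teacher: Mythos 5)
Your proof follows the same route as the paper's. For the converse (which is indeed the crux), the paper likewise applies $\Delta_{23}$ to an element of minimal degree $k$ and reads off the degree-$(k-1)$ coefficient $-(k+\tfrac{\alpha}{2})a_k y z^{k-1}$; your associated-graded packaging is the same computation, and it even supplies a point the paper leaves implicit, namely that $B={\bf k}[x,y]/(x^2+y^2-1)$ is a domain, which is what lets one deduce $d+\tfrac{\alpha}{2}=0$ from $(d+\tfrac{\alpha}{2})\bar y\,\bar g=0$. Your leading-symbol bookkeeping also checks out: for $a\in A$ one has
\[
\Delta_{23}\cdot(z^k a\otimes u) \;=\; -\bigl(k+\tfrac{\alpha}{2}\bigr)\,y\,a\,z^{k-1}\otimes u \;+\; z^{k}\,\Delta_{23}(a)\otimes u,
\]
and the last term lies in $z^kA\otimes u$ because $\Delta_{23}=z\tfrac{\del}{\del y}$ preserves $A$; so the spurious $z^{k-2}$-terms you worried about do recombine as claimed, and similarly for $\Delta_{31}$.

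The one genuine gap is in the existence direction, at ``it follows at once that $M=A\cdot(z^{-n}\otimes u)$ is a $\D$-submodule.'' From $\Delta_{12}\cdot w=\Delta_{23}\cdot w=\Delta_{31}\cdot w=0$ alone this does not follow: the $A\D$-module axioms constrain $\eta\cdot(a\cdot m)$, not $(a\eta)\cdot m$, and the action $(1)$ is \emph{not} $A$-linear in the vector field. Concretely, for $a\in A$ and $\Delta_j$ one of the three generators, $(a\Delta_j)\cdot w = a\,(\Delta_j\cdot w)+\tfrac{\alpha}{2}\Delta_j(a)\,w$; for instance $(y\Delta_{23})\cdot w = n\,z^{1-n}\otimes u\neq 0$, so $w$ is annihilated by the three generators but not by all of $\D$. (For higher-dimensional $U$ this inference pattern actually fails outright, since the correction term involves $E_{p,i}u$ and escapes $A\cdot(g\otimes u)$ --- this discrepancy is precisely what Proposition~\ref{closedop} exploits.) Fortunately your own divergence formula repairs the step in one line: since $\mathrm{div}(a\eta)=a\,\mathrm{div}(\eta)+\eta(a)$, the displayed identity follows, so $(a\Delta_j)\cdot w=\tfrac{\alpha}{2}\Delta_j(a)\,w\in M$, hence $\eta\cdot w\in M$ for every $\eta\in\D$, and then the Leibniz rule gives $\eta\cdot(g\,w)=\eta(g)\,w+g\,(\eta\cdot w)\in M$ for all $g\in A$. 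With that insertion the argument is complete and matches the paper's in both directions.
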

\begin{proof}
Let $M$ be a bounded submodule of $A_{(z)}\otimes U_{0}^{\alpha}$. 
Let $w$ be a non-zero element of $M$ of lowest possible degree $k$. Then $w$ can be expressed
as $w=\sum_{i\geq k} z^ia_i\otimes v_0$ where $a_k\neq 0$. We compute
\begin{align*}
\Delta_{2,3} (z^ka_k \otimes v_0) &=z\frac{\del}{\del y}(z^ka_k)\otimes v_0 + z^ka_k\frac{\del z}{\del y}\otimes E_{2,2}v_0\\
&=z\left(z^k\frac{\del a_k}{\del y} - ka_k z^{k-1}\frac{y}{z}\right)\otimes v_0 - z^ka_k\tfrac{y}{z}\otimes\tfrac{\alpha}{2}v_0,
\end{align*}
which modulo the space $z^kA\otimes U_{0}^{\alpha}$ equals
\[-(k+\tfrac{\alpha}{2})a_kyz^{k-1}\otimes v_0.\]
So by the minimality of $k$ we must have $\alpha=-2k$. On the other hand it is easy to check that for $\alpha=2k$ the space $A^{\alpha} =z^k \otimes U_{0}^{\alpha}$ is an $A\D$-submodule in $A_{(z)} \otimes U_{0}^{\alpha}$. 
\end{proof}

\subsection*{Higher rank}
\subsubsection*{Module of $1$-forms on $X$}
In this section we consider for a moment an arbitrary $s$-dimensional variety $X$.
The space of $1$-forms $\Omega$ is an $A\D$-module where $A$ acts by left multiplication and vector fields act as follows:
\[\left(\sum_{i=1}^sf_i\frac{\del}{\del t_i}\right)\cdot \sum_{j=1}^s(g_j dt_j) = \sum_{i,j=1}^sf_i\frac{\del g_j}{\del t_i}dt_j
+g_jd\left(f_i\frac{\del}{\del t_i} t_j\right)\]
\[=\sum_{i,j=1}^sf_i\frac{\del g_j}{\del t_i}dt_j+\delta_{ij}g_jd(f_i) = f_i\frac{\del g_j}{\del t_i}dt_j+\delta_{ij}g_j \sum_{p=1}^{n} \frac{\del f_i}{\del t_p} dt_p.\]
By identifying $e_{i} \leftrightarrow dt_i$ we see that $\Omega \subset A_{(h)}\otimes V$, where the action on $V$ now is
$E_{p,i}e_j=\delta_{i,j}e_p$ which shows that $V$ is the natural $\fr{gl}_s$-module.

For $X=\Sp^2$ this means that $\Omega \subset A_{(z)}\otimes U_{1}^{1}$. In this case, the submodule $\Omega$ is generated by $dz=-z^{-1}xdx-z^{-1}ydy$.

\subsubsection*{Module of vector fields on $X$}
Similarly, the Lie algebra $\D$ of vector fields themselves forms an $A\D$-module in a natural way: $A$ acts by left multiplication, and $\D$ acts adjointly. We may rewrite this $\D$-action in the following way:
\[\left(\sum_{i=1}^sf_i\frac{\del}{\del t_i}\right)\cdot \sum_{j=1}^sg_j\frac{\del}{\del t_j} = \sum_{i,j=1}^sf_i\frac{\del g_j}{\del t_i}\frac{\del }{\del t_j}
-g_j\frac{\del f_i}{\del t_j}\frac{\del }{\del t_i}\]
\[=\sum_{i,j=1}^s\left(f_i\frac{\del g_j}{\del t_i}\frac{\del }{\del t_j}-\sum_{p=1}^{s}g_j\frac{\del f_i}{\del t_p} \cdot \delta_{p,j}\frac{\del}{\del t_i}\right).\]

Comparing with the definition $(1)$, we see that $\D$ is isomorphic to a submodule of $A_{(h)}\otimes U$ with
 $v_{i} \leftrightarrow \frac{\del}{\del t_i}$. The action on this module $\fr{gl}_s$-module $U$ is seen to be given by $E_{p,i} \cdot v_j = -\delta_{p,j} v_i$, which shows that $U$ is the dual of the natural $\fr{gl}_s$-module.

For $X=\Sp^2$ this means that $\D \subset A_{(z)}\otimes U_{1}^{-1}$. This $A\D$-submodule is generated by
 $\Delta_{1,2}=y\frac{\del}{\del x}-x\frac{\del}{\del y}$.

\begin{prop}
\label{closedop}
Let $M$ be an $A\D_{\Sp^2}$-submodule of $A_{(z)} \otimes U$, where $U$ is a finite-dimensional $\fr{gl}_2$-module. Then for $\sum_k g_k \otimes u_k \in M$ we also have $\sum_k (zg_k\otimes E_{i,j}\cdot u_k)\in M$ for all $1 \leq i,j \leq 2$. In other words, $M$ is closed under the operators $z\otimes E_{i,j}$.
\end{prop}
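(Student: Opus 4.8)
The plan is to play the two module structures off against each other. For $a \in A$ and $\eta \in \D$ the vector field $a\eta$ again lies in $\D$ (recall $\D$ is an $A$-module), so for any $m \in M$ both $(a\eta)\cdot m$ and $a\cdot(\eta \cdot m)$ belong to $M$: the first because $M$ is $\D$-stable, the second because $M$ is stable under the $\D$-action followed by multiplication by $a \in A$. Hence their difference lies in $M$ as well. The key point is that this difference is a purely zeroth-order operator, in which all derivatives of the tensor factor have cancelled.

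Concretely, work in the chart $N(z)$ with chart parameters $t_1 = x$, $t_2 = y$, and write $\eta = \sum_i f_i \frac{\del}{\del t_i}$. Applying formula $(1)$ to $a\eta = \sum_i (a f_i)\frac{\del}{\del t_i}$ and using the Leibniz rule $\frac{\del (a f_i)}{\del t_p} = \frac{\del a}{\del t_p} f_i + a \frac{\del f_i}{\del t_p}$, the first-order terms and the terms carrying $\frac{\del f_i}{\del t_p}$ cancel against the corresponding terms of $a\cdot(\eta \cdot m)$, leaving, for $m = \sum_k g_k \otimes u_k$,
\[(a\eta)\cdot m - a\cdot(\eta \cdot m) = \sum_k \sum_{p,i} \frac{\del a}{\del t_p}\, f_i\, g_k \otimes E_{p,i}\cdot u_k \in M.\]

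It then remains to choose $a$ and $\eta$ so that the right-hand side is exactly $\sum_k z g_k \otimes E_{i,j}\cdot u_k$. I would take $\eta$ to be $\Delta_{23} = z\frac{\del}{\del y}$ or $\Delta_{31} = -z\frac{\del}{\del x}$, whose chart coefficients are $(f_1,f_2) = (0,z)$ and $(-z,0)$, together with $a \in \{x,y\}$. Since $x,y$ are the chart parameters, $\frac{\del x}{\del t_p}$ and $\frac{\del y}{\del t_p}$ are Kronecker deltas, so each choice isolates a single summand: $(a,\eta)=(x,\Delta_{23})$ gives the $E_{1,2}$-operator, $(y,\Delta_{23})$ the $E_{2,2}$-operator, and $(x,\Delta_{31}),(y,\Delta_{31})$ the $E_{1,1}$- and $E_{2,1}$-operators (up to an overall sign). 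This exhausts all four operators $z\otimes E_{i,j}$.

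The one idea that has to be found is the opening move: a single vector-field action can never be purely zeroth order, since a derivation is determined by its symbol, so one cannot simply read off $z\otimes E_{i,j}$ from a single application of $\D$; the cancellation between $(a\eta)\cdot m$ and $a\cdot(\eta\cdot m)$ is precisely what removes the first-order part. After that the proof is a short explicit computation, and I anticipate no further difficulty --- note in particular that because $a$ is taken to be a chart parameter, the chart derivatives $\frac{\del z}{\del x} = -\frac{x}{z}$, $\frac{\del z}{\del y} = -\frac{y}{z}$ drop out of the final identity altogether.
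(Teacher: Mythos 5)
Your proposal is correct and follows essentially the same route as the paper: the paper also forms the difference $(f\mu)\cdot(g\otimes u)-f\cdot(\mu\cdot(g\otimes u))\in M$, observes that the first-order and $\frac{\del f_i}{\del t_p}$ terms cancel, and then specializes to $\mu\in\{\Delta_{2,3},\Delta_{3,1}\}$ and $f\in\{x,y\}$ to isolate each operator $z\otimes E_{i,j}$. The only cosmetic difference is that you first record the cancellation formula for general $a$ and $\eta$ before substituting, while the paper computes directly with $\mu=\Delta_{2,3}$ and a general $f$.
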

\begin{proof}
It suffices to prove the statement for a single term $g\otimes u$. For each vector field $\mu\in \D$ and for each function $f\in A$ we have
\[(f\mu)\cdot (g\otimes u)-f(\mu \cdot (g\otimes u)) \in M.\]
Taking $\mu=\Delta_{2,3}$ we obtain the following element in $M$:
\[f z\frac{\del g}{\del y} \otimes u + fg\frac{\del z}{\del x}\otimes E_{1,2}u + zg\frac{\del f}{\del x}\otimes E_{1,2}u + fg\frac{\del z}{\del y}\otimes E_{2,2}u+ zg\frac{\del f}{\del y}\otimes E_{2,2}u\]
\[-zf\frac{\del g}{\del y} \otimes u - fg\frac{\del z}{\del x}\otimes E_{1,2}u - fg\frac{\del z}{\del y}\otimes E_{2,2}u\]
\[=zg\frac{\del f}{\del x}\otimes E_{1,2}u+zg\frac{\del f}{\del y}\otimes E_{2,2}u.\]
Taking $f=x$ we obtain $zg\otimes E_{1,2} u \in M$. If we instead take $f=y$ we obtain $zg\otimes E_{2,2}u$.
Analogously, by taking $\mu=\Delta_{3,1}$ and $f=x,y$ we obtain $zg\otimes E_{1,1}u\in M$ and $zg\otimes E_{2,1}u\in M$.
\end{proof}

In what follows we shall use the following version of Hilbert's Nullstellensatz, see~\cite[Section 1.2.2]{S}.

\begin{lemma}
Let $I \lhd A$ be an ideal. Suppose that $g\in A$ satisfies $g(P)=0$ at all points $P\in X$ for which $f(P)=0$ for all $f\in I$. Then $g^k \in I$.
\end{lemma}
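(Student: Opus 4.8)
The statement is precisely the strong form of Hilbert's Nullstellensatz for the finitely generated ${\bf k}$-algebra $A$: the hypothesis says that $g$ vanishes on the zero locus $V_X(I) = \{P \in X : f(P) = 0 \text{ for all } f \in I\}$, and the conclusion $g^k \in I$ says exactly that $g$ lies in the radical $\sqrt{I}$. The plan is to reduce the claim to the classical Nullstellensatz over the ambient polynomial ring $R = {\bf k}[x_1,\ldots,x_m]$ and then to deduce the radical-membership from the weak Nullstellensatz by the Rabinowitsch trick.

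First I would lift everything from $A$ to $R$. Let $\pi : R \to A$ be the quotient map with kernel $\langle g_1,\ldots,g_n\rangle$, set $\tilde I = \pi^{-1}(I) \lhd R$, and choose a lift $\tilde g \in R$ of $g$ (if $g=0$ there is nothing to prove, so assume $\tilde g \neq 0$). Since $\tilde I \supseteq \langle g_1,\ldots,g_n\rangle$, the common zero locus $V_R(\tilde I) \subseteq {\bf k}^m$ is contained in $X$, and a point $P \in X$ lies in $V_R(\tilde I)$ precisely when every element of $I$ vanishes at $P$; thus $V_R(\tilde I) = V_X(I)$, and $\tilde g$ vanishes on this set by hypothesis. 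It therefore suffices to prove $\tilde g^{\,k} \in \tilde I$ for some $k$, as applying $\pi$ then gives $g^k \in I$.

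For the polynomial-ring statement I would run the Rabinowitsch trick. Write $\tilde I = \langle f_1, \ldots, f_r\rangle$ (finitely generated, $R$ being Noetherian), introduce a fresh variable $y$, and consider the ideal $\langle f_1,\ldots,f_r,\, 1 - y\tilde g\rangle \lhd R[y]$. Its zero set in ${\bf k}^{m+1}$ is empty: at any point where all $f_i$ vanish $\tilde g$ vanishes too, so $1 - y\tilde g = 1 \neq 0$ there. By the weak Nullstellensatz this ideal equals all of $R[y]$, so $1 = \sum_i h_i f_i + h\cdot(1 - y\tilde g)$ for some $h_i, h \in R[y]$. Mapping $R[y]$ into the field of fractions ${\bf k}(x_1,\ldots,x_m)$ by $y \mapsto 1/\tilde g$ kills the last term and yields $1 = \sum_i h_i(x,1/\tilde g)\, f_i$; clearing denominators by the largest power $\tilde g^{\,k}$ appearing gives $\tilde g^{\,k} = \sum_i \big(\tilde g^{\,k} h_i(x,1/\tilde g)\big) f_i \in \tilde I$, as required.

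The real content — and the one step I would not reprove from scratch — is the weak Nullstellensatz invoked above: a proper ideal of a finitely generated algebra over an algebraically closed field has a common zero. This is exactly where algebraic closure of ${\bf k}$ is indispensable, and it is classically obtained from Zariski's lemma (a field that is finitely generated as a ${\bf k}$-algebra is algebraic over ${\bf k}$), typically via Noether normalization. Since this is precisely the input supplied by the cited reference, I would simply quote it; the two genuinely active ingredients of the argument are the lift to $R$ and the Rabinowitsch trick, both of which are routine once the weak form is in hand.
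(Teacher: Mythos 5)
Your proof is correct. Note, however, that the paper does not prove this lemma at all: it is stated as a known form of Hilbert's Nullstellensatz and quoted directly from Shafarevich (Section 1.2.2), so there is no argument in the paper to compare against. Your derivation is the standard one: the reduction to the ambient polynomial ring via $\tilde I = \pi^{-1}(I)$ (using that $\tilde I$ contains the defining ideal of $X$, so $V_R(\tilde I)=V_X(I)$ and the radical-membership descends through $\pi$), followed by the Rabinowitsch trick, with the weak Nullstellensatz quoted as the one nontrivial input. All steps check out, including the handling of the degenerate case $g=0$ and the substitution $y\mapsto 1/\tilde g$ into the fraction field before clearing denominators. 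In effect you have supplied the proof the paper delegates to its reference, at the cost of still citing the weak form --- a reasonable stopping point, since proving that from scratch (Zariski's lemma or Noether normalization) is genuinely where the algebraic closure of ${\bf k}$ does its work.
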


\begin{prop}
\label{density}
Let $U$ be a finite-dimensional irreducible $\fr{gl}_2$-module. Then every nonzero $A\D_{\Sp^2}$-submodule of $A_{(z)} \otimes U$ is dense.
\end{prop}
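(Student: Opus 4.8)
The plan is to reduce density to a single statement about an ideal of $A$ and then invoke the Nullstellensatz lemma above. For a nonzero submodule $M$, set
\[ I := \{\, a \in A \mid z^k a \otimes U \subseteq M \text{ for some } k \geq 0 \,\}. \]
One checks immediately that $I$ is an ideal of $A$: closure under multiplication by $A$ is just the $A$-action, and closure under addition follows by raising two witnesses to a common power of $z$. Moreover density of $M$ is \emph{equivalent} to $z^k \in I$ for some $k$, since if $z^{L}z^k \otimes U \subseteq M$ then applying the $A$-action gives $z^{L+k} A \otimes U \subseteq M$. So the whole argument splits into (i) showing $I \neq 0$, and (ii) showing that $z$ vanishes on the common zero locus of $I$, whence $z^k \in I$ by the Nullstellensatz.

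For (i) I would extract a nonzero pure tensor. Writing $U = U_m^{\alpha}$ with weight basis $v_0, \dots, v_m$, take any nonzero $w = \sum_i g_i \otimes v_i \in M$. By Proposition~\ref{closedop}, $M$ is stable under $W := z\otimes(E_{1,1}-E_{2,2})$, which acts by $W(g\otimes v_i) = (m-2i)\,zg \otimes v_i$. Hence $z^{m-j}W^j w = \sum_i (m-2i)^j z^m g_i \otimes v_i \in M$ for $0 \le j \le m$; since the nodes $m-2i$ are distinct, the Vandermonde matrix $\big((m-2i)^j\big)$ is invertible, and a suitable ${\bf k}$-linear combination isolates $z^m g_{i_0}\otimes v_{i_0} \in M$ for each $i_0$. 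Choosing $i_0$ with $g_{i_0}\neq 0$ and clearing the denominator by multiplying by a power of $z$, we obtain $z^N a \otimes v_{i_0} \in M$ with $a \in A\setminus\{0\}$. Applying the operators $z\otimes E_{1,2}$ and $z\otimes E_{2,1}$ then moves $v_{i_0}$ to every $v_i$ at the cost of extra powers of $z$; evening these out yields $z^{N+m}a\otimes U \subseteq M$, so $a \in I$ and $I \neq 0$.

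The heart of (ii) is to prove that $I$ is stable under $\D$. Closure under $\Delta_{1,2}$ is clean because $\Delta_{1,2}\in\Der(A)$ and $\Delta_{1,2}(z)=0$: applying it to $z^L a\otimes u$ gives $z^L\Delta_{1,2}(a)\otimes u$ plus a term $z^L a\otimes(E_{2,1}-E_{1,2})u$ already in $M$. The fields $\Delta_{2,3}=z\frac{\del}{\del y}$ and $\Delta_{3,1}=-z\frac{\del}{\del x}$ are more delicate, as their chart expressions involve $\frac{\del z}{\del x}=-x/z$ and $\frac{\del z}{\del y}=-y/z$. The trick is to apply them to $z^{L+1}a\otimes u$ rather than $z^{L}a\otimes u$: formula $(1)$ gives
\[ \Delta_{2,3}\cdot(z^{L+1}a\otimes u) = z^{L+2}\tfrac{\del a}{\del y}\otimes u -(L+1)yz^{L}a\otimes u - xz^{L}a\otimes E_{1,2}u - yz^{L}a\otimes E_{2,2}u, \]
and every term after the first lies in $M$ because $z^L a\otimes U\subseteq M$. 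Hence $z^{L+2}\frac{\del a}{\del y}\otimes u\in M$ for all $u$, and since $z\frac{\del a}{\del y}=\Delta_{2,3}(a)\in A$ this says exactly $\Delta_{2,3}(a)\in I$; the analogous computation gives $\Delta_{3,1}(a)\in I$. As $\Delta_{1,2},\Delta_{2,3},\Delta_{3,1}$ generate $\D$ over $A$ and $I$ is an ideal, $I$ is a $\D$-submodule of $A$.

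Finally I would show $z$ vanishes on the zero locus of $I$. Suppose $P\in N(z)$ satisfies $a(P)=0$ for every $a\in I$. Since $I$ is $\D$-stable, every iterated derivative $\Delta^{\gamma}a$ lies in $I$ and so vanishes at $P$. Working in the chart parameters $x,y$ with $\frac{\del}{\del x}=-z^{-1}\Delta_{3,1}$ and $\frac{\del}{\del y}=z^{-1}\Delta_{2,3}$, I claim that $\big(\frac{\del}{\del x}\big)^{\alpha_1}\big(\frac{\del}{\del y}\big)^{\alpha_2}a$ vanishes at $P$ for all $\alpha$ and all $a\in I$; this follows by induction on $|\alpha|$, peeling off one factor $z^{-1}\Delta$ and expanding by Leibniz, since $z^{-1}$ and its derivatives are regular at $P$ while every surviving factor is a lower-order derivative of an element $\Delta a\in I$, hence zero at $P$ by the inductive hypothesis. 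This contradicts the finiteness of the valuation $\nu_P(a)$ for $a\neq 0$. Thus the zero locus of $I$ lies in $\{z=0\}$, the Nullstellensatz gives $z^k\in I$, and $M$ is dense. I expect the main obstacle to be precisely this final step: the bookkeeping that clears the $z$-denominators in the $\D$-action, and the inductive passage from $\D$-stability of $I$ to the vanishing of all chart-parameter derivatives, which is what links the algebraic hypothesis to the finiteness of $\nu_P$.
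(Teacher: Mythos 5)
Your proposal is correct and follows essentially the same route as the paper's own proof: an ideal $I\lhd A$ measuring density, Proposition~\ref{closedop} to extract pure tensors and show $I\neq 0$, the device of applying $\Delta_{2,3}$ and $\Delta_{3,1}$ to an element carrying one extra factor of $z$ so that the resulting derivative terms land back in $M$, and finally the finiteness of $\nu_P$ combined with the Nullstellensatz applied to $g=z$. Your local variations --- the Vandermonde argument with $z\otimes(E_{1,1}-E_{2,2})$ in place of the paper's nilpotent shift by $z\otimes E_{1,2}$, the saturated definition of $I$, and packaging the derivative step as $\D$-stability of $I$ followed by a Leibniz induction rather than iterating on a single element $f_0$ --- are sound but do not change the underlying argument.
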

\begin{proof}
Since $U$ is simple and finite-dimensional it has form $U_m^{\alpha}$ as above.
Let $M \subset A_{(z)} \otimes U$ be a submodule and define
 \[I=\{f \in A | f (A \otimes U) \subset M\}.\]
Then $I$ is an ideal of $A$. To show that $M$ is dense we need to show that $z^N\in I$ for some $N$.

Let $v\in M$ and express this element in the form $v=\sum_{i=0}^{m}f_i\otimes v_i$, with $f_i \in A_{(z)}$. In fact we may assume that $f_i \in A$ (otherwise just multiply by a power of $z$).
By Lemma~\ref{closedop}, $M$ is closed under the operator $z\otimes E_{1,2}$, so we obtain $z^kf_0 \otimes v_0\in M$ for some $k$ and for some nonzero $f_0$. Acting by $z\otimes E_{2,1}$ repeatedly on this element we get $z^{k+i}f_0\otimes v_i$, so in particular we have $z^{N}f_0 \otimes U \subset M$, which shows that
 $z^{N}f_0 \in I$ so $I$ is non-zero. 
 
We now aim to apply Hilbert's Nullstellensatz to the function $g=z$. Fix $P\in \Sp^2$ with nonzero $z$-coordinate. We need to show that there exists $f\in I$ with $f(P)\neq 0$. We had already found $z^Nf_0\in I$ so if $f_0(P)\neq 0$ we are done. Otherwise, consider the element
 $z\frac{\del}{\del x} (z^{N+1}f_0\otimes v_0) \in M$. This expands as
\[z^{N+2}\frac{\del f_0}{\del x} \otimes v_0 - z^Nf_0\left((N+1)x\otimes v_0 +x\otimes E_{1,1}v_0-y\otimes E_{2,1}v_0 \right),\]
and since the second term lies in $z^Nf_0(A\otimes U) \subset M$, we also get $z^{N+2}\frac{\del f_0}{\del x} \otimes v_0 \in M$.
This shows that $z^{N'}\frac{\del f_0}{\del x} \in I$ for some $N'$, and by symmetry we also get $z^{N'}\frac{\del f_0}{\del y} \in I$.
Since $\nu_P(f_0)$ is finite there is some product $d$ of derivations with $d(f_0)(P) \neq 0$.
So acting repeatedly with elements as above we eventually obtain $z^K d(f_0)\in M$ and $z^K d(f_0)$ is nonzero at $P$. By the Nullstellensatz this means that $z^K \in I$ for some $K$, which in turn means that $M$ is dense. 
\end{proof}


\begin{cor}
When $U$ is an irreducible $\fr{gl}_s$-module there exists at most one simple $A\D_{\Sp^2}$-submodule of $A_{(z)}\otimes U$.
\end{cor}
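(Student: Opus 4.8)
The plan is to obtain uniqueness as a direct consequence of the density result, Proposition~\ref{density}, which is the substantive ingredient and which I will assume. Suppose, toward a contradiction or simply to force equality, that $M_1$ and $M_2$ are two simple $A\D_{\Sp^2}$-submodules of $A_{(z)}\otimes U$. Since $U$ is irreducible, Proposition~\ref{density} applies to each nonzero submodule, so both $M_1$ and $M_2$ are dense: there exist integers $k_1,k_2$ with $z^{k_i}A\otimes U \subseteq M_i$ for $i=1,2$.

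Next I would exploit that the filtration $\{z^{k}A\otimes U\}_{k}$ is decreasing, in the sense that $z^{k}A\otimes U \subseteq z^{k'}A\otimes U$ whenever $k\geq k'$, because $z^{k}A = z^{\,k-k'}\,z^{k'}A \subseteq z^{k'}A$. Setting $k=\max(k_1,k_2)$ therefore yields $z^{k}A\otimes U \subseteq z^{k_i}A\otimes U \subseteq M_i$ for both $i$. Since $z^{k}A\otimes U$ is plainly nonzero, the intersection $M_1\cap M_2$ contains this nonzero $A\D$-submodule and is in particular itself nonzero.

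Finally, $M_1\cap M_2$ is a nonzero $A\D$-submodule of the simple module $M_1$, so simplicity forces $M_1\cap M_2=M_1$, i.e.\ $M_1\subseteq M_2$; the symmetric argument gives $M_2\subseteq M_1$, whence $M_1=M_2$. This shows that any two simple submodules coincide, so there is at most one.

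The main obstacle has in fact already been cleared: all the real work lies in Proposition~\ref{density}, where the Nullstellensatz is used to pass from a single nonzero element to a full ``dense'' slab $z^{k}A\otimes U$. Once density is in hand, the corollary is a routine intersection argument for simple objects in an abelian category. The only point deserving explicit mention is that the common lower slab $z^{k}A\otimes U$ is genuinely nonzero and sits inside both modules, guaranteeing that $M_1\cap M_2\neq 0$; this is immediate from the decreasing filtration and is what prevents the two simple submodules from being disjoint.
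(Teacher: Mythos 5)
Your proof is correct and follows exactly the paper's own argument: invoke Proposition~\ref{density} to get that both simple submodules contain a common slab $z^{N}A\otimes U$, so their intersection is nonzero, and simplicity forces them to coincide. The only difference is that you spell out the trivial nesting of the filtration, which the paper leaves implicit.
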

\begin{proof}
Let $M$ and $M'$ be simple submodules in $A_{(z)} \otimes U$. By Proposition~\ref{density} both modules are dense, so they both contain $z^N A\otimes U$ for large enough $N$. Thus $M \cap M'$ is a nonzero submodule of both $M$ and $M'$ so by simplicity we must have $M=M'$.
\end{proof}

\section{Tensor modules}
Consider two charts $N(h)$ and $N(\tilde{h})$ in our atlas for $X$.
Let $t_1, \ldots, t_s$ be chart parameters for $N(h)$ and let $\tilde{t}_1, \ldots, \tilde{t}_s$ be chart parameters for $N(\tilde{h})$.

Let $V$ be the natural $\fr{gl}_s$-module with basis $\{e_1, \ldots, e_s\}$.
Define an $A_{(\tilde{h})}$-linear map $C: A_{(h,\tilde{h})}\otimes V \ra A_{(h,\tilde{h})}\otimes V$ by \[Ce_i=\sum_{j=1}^s \frac{\del t_i}{\del \tilde{t}_j}e_j.\] 
Note that $C$ is invertible.

From now on we shall understand all $\GL_s$-modules (resp. $\fr{gl}_s$-modules) as $\GL(V)$-modules (resp. $\fr{gl}(V)$-modules).

For a finite-dimensional rational $\GL_s$-module $U$, write $\rho:\GL_s \ra \GL(U,k)$ for the corresponding representation. Then we may consider $\rho(C)$ as an element of $\GL(U,A_{(h)})$. We can also consider it as a map $A_{(h,\tilde{h})} \otimes U \ra A_{(h, \tilde{h})} \otimes U$.\\

Denote by $\fr{T}$ the full subcategory of $A\D$-Mod consisting of those objects $M$ that satisfy
\begin{itemize}
	\item $M$ is finitely generated as an $A$-module,	
	\item For each chart $N(h)$ in our fixed atlas there exists an injective $A\D$-module homomorphisms $\varphi_{h}: M \ra A_{(h)}\otimes U$,
	\item The following diagram commutes for each pair of charts $(N(h),N(\tilde{h}))$:
	\[\xymatrix@C=1.5cm@R=0.7cm{
        		& A_{(h)}\otimes U \ar@{^{(}->}[r] &  A_{(h,\tilde{h})}\otimes U \ar[dd]^{\rho(C)}\\
        M \ar@{^{(}->}[ur]^{\varphi_h} \ar@{^{(}->}[dr]_{\varphi_{\tilde{h}}}\\ 
                & A_{(\tilde{h})}\otimes U \ar@{^{(}->}[r] &  A_{(h,\tilde{h})}\otimes U\\}\]
\end{itemize}

The objects of the category $\fr{T}$ will be called tensor modules.

We point out that the modules $\Omega$ and $\D$ are in fact tensor modules. For $\Omega$, the natural $\fr{gl}_s$-module appearing in its construction is identified with $V$ via $e_i \leftrightarrow dt_i$. Then the transformation $C$ corresponds to the usual change of variables formula for the differentials. Clearly all compatibility conditions are satisfied and $\Omega$ is a tensor module.

The $\fr{gl}_s$-module corresponding to $\D$ is the dual of the natural module $(V^{*},\rho^{*})$. It can easily be checked that $\rho^{*}(C)$ corresponds to the change of variables formula for partial derivatives.

The category of tensor modules is also closed under taking tensor products and duals,
in particular we have $\Omega \simeq \D^{\circ}$.

Finally, for the sphere the rank $1$ modules $A^{\alpha}$ (for $\alpha \in 2\bb{Z}$) are also tensor modules with $\rho(C)=\det(C)^{\frac{\alpha}{2}}$.

\begin{thm}
Let $M \in \fr{T}$ be a tensor module on $\Sp^2$ corresponding to a simple rational $\GL_2$-module $U$. Then $M$ is a simple $A\D$-module. 
\end{thm}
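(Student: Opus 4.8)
The plan is to show that any nonzero submodule $N \subseteq M$ must equal $M$ by combining the density result of Proposition~\ref{density} with the defining cocycle condition that glues the local pictures $\varphi_h : M \hookrightarrow A_{(h)}\otimes U$ across charts. First I would fix the chart $N(z)$ and transport a nonzero submodule $N$ into $A_{(z)}\otimes U$ via the injection $\varphi_z$. Since $U$ is simple and finite-dimensional, Proposition~\ref{density} applies and tells us that $\varphi_z(N)$ is dense: there exists $K$ with $h^{K}A\otimes U \subseteq \varphi_z(N)$, where $h=z$. The goal then is to leverage this to conclude $\varphi_z(N) = \varphi_z(M)$, which gives $N=M$ by injectivity.

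The key structural input I would exploit is that $M$ itself, being finitely generated over $A$, sits as a \emph{bounded} submodule of $A_{(z)}\otimes U$ under $\varphi_z$; likewise $\varphi_z(N)$ is bounded. So both $\varphi_z(M)$ and $\varphi_z(N)$ are simultaneously bounded and dense, meaning there are integers $K \ge k$ with
\[
z^{K}A\otimes U \;\subseteq\; \varphi_z(N) \;\subseteq\; \varphi_z(M) \;\subseteq\; z^{k}A\otimes U.
\]
The subtlety is that density alone does not force $\varphi_z(N)=\varphi_z(M)$, since $\varphi_z(M)$ might strictly contain $\varphi_z(N)$ in the finitely many ``low-degree'' layers between $z^{k}$ and $z^{K}$. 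This is where the commuting-diagram / change-of-chart condition must enter: an element of $M$ visible at low degree in the chart $N(z)$ corresponds, after applying $\rho(C)$, to an element of $M$ in another chart $N(x)$ or $N(y)$, and one uses that the three charts $N(x), N(y), N(z)$ cover all of $\Sp^2$ (no point has all coordinates zero). The plan is to argue that a section of $M$ not already in $N$ would, in some chart, have lower degree than permitted, contradicting the density of $N$ in \emph{that} chart after transport by $\rho(C)$.

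Concretely, I would take $w \in \varphi_z(M)$ and show $w \in \varphi_z(N)$ by examining its behaviour near the equator $z=0$ in charts where $x$ or $y$ is nonzero. The operator $\rho(C)$ implements multiplication by powers of the Jacobian $\partial t_i/\partial \tilde t_j$ (for rank-one modules literally $\det(C)^{\alpha/2}$), which near $z=0$ has a controlled pole/zero order; translating the degree bound on $\varphi_x(N)$ and $\varphi_y(N)$ back through the diagram pins down exactly the degree of $w$ in the $z$-chart and forces $w$ into $\varphi_z(N)$. The main obstacle I anticipate is precisely this bookkeeping: making the degree estimates across charts match up rigorously, i.e.\ showing that density in \emph{each} chart of the atlas, glued by the cocycle $\rho(C)$, upgrades to the equality $N=M$ globally rather than merely ``$N$ dense in $M$''. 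I expect the smoothness of $\Sp^2$ and the fact that the atlas genuinely covers every point — so that $z=0$ is handled by the $x$- and $y$-charts — to be the crucial geometric ingredient that closes this gap, analogous to how the Nullstellensatz was used to promote local nonvanishing to a global conclusion in Proposition~\ref{density}.
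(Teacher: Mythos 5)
You have assembled the right ingredients --- density of the submodule's image in each of the three charts $N(x)$, $N(y)$, $N(z)$ via Proposition~\ref{density}, boundedness of $M$ coming from finite generation over $A$, the fact that the three charts cover $\Sp^2$, and a Nullstellensatz-type local-to-global step --- but the mechanism you propose for combining them has a genuine gap. Your plan is to show $w \in \varphi_z(N)$ by ``pinning down the degree of $w$'' through cross-chart degree estimates transported by $\rho(C)$. This cannot work as stated, because membership in $N$ is not a degree-theoretic condition: between the layers $z^{K}A\otimes U$ and $z^{k}A\otimes U$ there are many distinct $A$-submodules with identical degree data (e.g. $z^{K}A\otimes U + z^k x A\otimes U$ versus $z^{K}A\otimes U + z^k y A\otimes U$), so no bookkeeping of degrees, in any or all charts, can distinguish elements of $N$ from elements of $M$ outside $N$. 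You flag this bookkeeping as the main unresolved obstacle yourself, and it is never closed; as written, the argument ends exactly where the real difficulty begins.

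The missing device --- and the way the paper closes the gap --- is the conductor ideal $I=\{f\in A \mid fM\subset N\}$ (or element-wise, $I_w=\{f\in A\mid fw\in N\}$). This ideal is defined intrinsically in terms of the abstract modules $M$ and $N$, so it is chart-independent, and the cocycle $\rho(C)$ never needs to be tracked at all. In the chart $N(z)$, density of $\varphi_z(N)$ gives $z^{N_0}A\otimes U\subset\varphi_z(N)$, while boundedness gives $\varphi_z(M)\subset z^{k}A\otimes U$; together these yield $z^{N_0-k}M\subset N$, i.e.\ a power of $z$ lies in $I$. By symmetry, powers of $x$ and $y$ lie in $I$ as well. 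Since no point of $\Sp^2$ has $x=y=z=0$, the zero locus $V(I)$ is empty, and the weak Nullstellensatz gives $1\in I$, i.e.\ $M=N$. This is precisely the local-to-global principle you anticipated, but applied to an ideal of $A$ rather than to degrees; with this one idea your outline becomes the paper's proof, and without it the argument does not close.
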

\begin{proof}
Let $M'$ be a nonzero submodule of $M$ and define
\[I=\{f \in A | f M \subset M'\}.\]
Then $I$ is an ideal and it does not depend on the chart we use. Since $M$ is bounded, Proposition~\ref{density} implies that $z^N \in I$. By symmetry we also have $x^N \in I$ and $y^N \in I$ for some large enough $N$. But then the set of common zeros $V(I) \subset X$ is empty, and Hilbert's weak Nullstellensatz gives $1\in I$. In view of the definition of $I$ this says that $M=M'$.
\end{proof}

\begin{thm}
\label{subgen}
When $\frac{m-\alpha}{2}\in \bb{Z}$ the vector
\[w_m:=\sum_{i=0}^{m} z^{-\frac{\alpha+m}{2}}x^{m-i}y^{i}\otimes v_i\]
generates a bounded $A\D$-submodule inside $A_{(z)}\otimes U_{m}^{\alpha}$.
On the other hand, when $\frac{m-\alpha}{2}\not\in \bb{Z}$, the module $A_{(z)}\otimes U_{m}^{\alpha}$ contains no bounded submodules.
\end{thm}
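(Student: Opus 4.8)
The two directions are quite different in flavor, so I would treat them separately; write $\beta=\frac{m+\alpha}{2}$ throughout. Set $R=A/zA\cong{\bf k}[x,y]/\langle x^2+y^2-1\rangle$, the coordinate ring of the equator, which is an \emph{integral domain} because the conic $x^2+y^2-1$ is irreducible. For a nonzero $w=z^k\sum_i a_i\otimes v_i+(\text{terms of higher degree})$ of degree $k$ in $A_{(z)}\otimes U_m^\alpha$, call $\sigma(w):=\sum_i\bar a_i\otimes v_i\in R\otimes U_m^\alpha$ its symbol, where $\bar a_i$ is the image of $a_i$ in $R$. For the impossibility statement I would mimic the rank one Proposition: assume $M$ is a nonzero bounded submodule, pick $w\in M$ with $\deg w=k=\deg(M)$, and use that $\Delta_{23}=z\frac{\del}{\del y}$ lowers degree by at most one, so the degree $k-1$ part of $\Delta_{23}w$ depends only on $\sigma(w)$. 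A direct computation with the $E_{p,i}$-action gives this part as
\[z^{k-1}\sum_j\big[(\nu+j)\,y\bar a_j+(m-j)\,x\bar a_{j+1}\big]\otimes v_j,\qquad \nu=k+\tfrac{\alpha-m}{2}.\]
Minimality of $k$ forces it to vanish; evaluating at the top index $J$ (the largest $j$ with $\bar a_j\neq0$) kills the second summand and leaves $(\nu+J)\,y\bar a_J=0$. Since $R$ is a domain and $y,\bar a_J\neq0$, the scalar $\nu+J$ vanishes, forcing $\tfrac{\alpha-m}{2}=-(k+J)\in\bb{Z}$ — impossible when $\tfrac{m-\alpha}{2}\notin\bb{Z}$. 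I expect this half to be essentially routine.

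For the first statement, observe that $\tfrac{m-\alpha}{2}\in\bb{Z}$ makes $\beta\in\bb{Z}$, so $w_m=z^{-\beta}\phi$ with $\phi:=\sum_{i=0}^m x^{m-i}y^i\otimes v_i\in A\otimes U_m^\alpha$ is a genuine element of $A_{(z)}\otimes U_m^\alpha$. A short calculation from the $E_{p,i}$-action shows $\Delta_{12}\phi=0$ (the infinitesimal rotation invariance of $\phi$), while
\[\Delta_{23}w_m=z^{-\beta+1}\tfrac{\del\phi}{\del y},\qquad \Delta_{31}w_m=-\,z^{-\beta+1}\tfrac{\del\phi}{\del x}.\]
This pattern suggests that the submodule $M$ generated by $w_m$ is exactly
\[M=\sum_{l=0}^{m}\ \sum_{\gamma\in\bb{N}^2,\,|\gamma|=l}A\,z^{-\beta+l}\,\del^{\gamma}\phi,\]
a \emph{finite} sum, since $\del^{\gamma}\phi=0$ once $|\gamma|>m$; being finitely generated over $A$ it is bounded (as the text notes, finitely generated over $A$ is equivalent to bounded).

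To prove this description I would establish two inclusions. The inclusion $\supseteq$, that each $z^{-\beta+l}\del^\gamma\phi$ lies in $M$, follows by repeatedly applying $\Delta_{23}$ and $\Delta_{31}$ to $w_m$: each application raises the power of $z$ by one and differentiates $\phi$, and a downward induction on $l$ peels off $z^{-\beta+l}\del^\gamma\phi$ modulo generators of smaller $l$ already produced. For $\subseteq$ it is enough to verify that the right-hand side is $\D$-stable; since $\D$ is generated over $A$ by $\Delta_{12},\Delta_{23},\Delta_{31}$ and the right-hand side is an $A$-module, this reduces to checking that $\Delta_{ij}(z^{-\beta+l}\del^\gamma\phi)$ lands back in the span for each generator.

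This closure check is the step I expect to be the main obstacle. Writing $\del^\gamma\phi=\sum_j p_j\otimes v_j$, applying $\Delta_{23}=z\frac{\del}{\del y}$ produces a harmless higher term $z^{-\beta+l+1}\del^{\gamma+(0,1)}\phi$ together with a pole term of degree $-\beta+l-1$ — arising from $\frac{\del}{\del y}$ hitting $z^{-\beta+l}$ and from the $E_{p,i}$ part of the action — whose coefficient computes to
\[z^{-\beta+l-1}\sum_j\big[(m-j-l)\,y\,p_j-(m-j)\,x\,p_{j+1}\big]\otimes v_j.\]
The crux is to recognize this as an $A$-combination of the lower generators $z^{-\beta+(l-1)}\del^{\gamma'}\phi$; in the first nontrivial case $l=1$ it collapses neatly to $-m\,w_m$, and in general this is exactly where the sphere relation $x^2+y^2+z^2=1$ must be used to rewrite the polynomial coefficients $p_j$. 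Conceptually, $\tfrac{m-\alpha}{2}\in\bb{Z}$ is precisely the condition for $U_m^\alpha$ to integrate to a rational $\GL_2$-module, so that $w_m$ is the chart $N(z)$ expression of a global tensor field and $M$ is its finitely generated module of sections; the computation above is the concrete verification of this.
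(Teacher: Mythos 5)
Your proof of the second statement (non-existence of bounded submodules when $\frac{m-\alpha}{2}\notin\bb{Z}$) is correct, and it takes a genuinely different route from the paper's: the paper first uses Proposition~\ref{density} to produce an element $z^N\otimes v_0$ in the submodule with $N$ minimal and applies $\Delta_{23}$ to it, whereas you work directly with an element of minimal degree and its symbol in $R\otimes U_m^\alpha$, $R=A/zA$, exploiting that $R$ is a domain. Your displayed formula for the degree-$(k-1)$ component of $\Delta_{23}w$ checks out, the top-index argument is sound, and your version is self-contained (it needs neither Proposition~\ref{density} nor Proposition~\ref{closedop}); it even supplies detail that the paper's terse ``by the minimality of $N$'' leaves implicit.

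The first statement is where the genuine gap lies, and it is not only the computational one you flag. First, the ``crux'' closure computation is left as an expectation; for the record it does hold, and \emph{without} the sphere relation: for $\gamma=(a,b)$ with $|\gamma|=l$ one gets the pure polynomial identity $\Delta_{23}\bigl(z^{-\beta+l}\del_x^a\del_y^b\phi\bigr)=z^{-\beta+l+1}\del_x^a\del_y^{b+1}\phi-b(m-l+1)\,z^{-\beta+l-1}\del_x^a\del_y^{b-1}\phi$, so the pole term is a scalar multiple of a single lower generator, and your guess that $x^2+y^2+z^2=1$ enters here is off the mark. Second, and more seriously, your reduction of $\D$-stability to stability under $\Delta_{12},\Delta_{23},\Delta_{31}$ is invalid. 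These three fields generate $\D$ as an $A$-module but \emph{not} as a Lie algebra (they span a copy of $\fr{so}_3$ closed under bracket), and the action (1) of $\D$ on $A_{(z)}\otimes U$ is not $A$-linear in the vector field: for $w=g\otimes u$ and $\mu=\sum_i\mu_i\frac{\del}{\del t_i}$ one has $(f\mu)\cdot w-f\cdot(\mu\cdot w)=\sum_{p,i}g\,\mu_i\,\frac{\del f}{\del t_p}\otimes E_{p,i}u$, which is nonzero in general --- this discrepancy is precisely the content of the paper's Proposition~\ref{closedop}. Consequently, closure under the three $\Delta$'s together with $A$-stability does not yield closure under $f\Delta_{ij}$ for arbitrary $f\in A$, and even after completing your pole-term identity you would not yet have shown that your candidate is a $\D$-submodule. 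The fix is to additionally verify closure under the correction operators, e.g. $z\otimes E_{p,i}$ (which does hold for your generators, via Leibniz-type identities such as $(z\otimes E_{1,2})(z^{-\beta+l}\del_x^a\del_y^b\phi)=z^{-\beta+l+1}\bigl(y\,\del_x^{a+1}\del_y^b\phi+b\,\del_x^{a+1}\del_y^{b-1}\phi\bigr)$), but this work is absent from your proposal.

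For comparison, the paper avoids all such verifications: writing $w_m=z^{-\frac{\alpha+m}{2}}(xY+yX)^m$, it notes that multiplication in ${\bf k}[X,Y]$ induces a surjective $A\D$-morphism $(A_{(z)}\otimes U_m^\alpha)\otimes_A(A_{(z)}\otimes U_n^\beta)\ra A_{(z)}\otimes U_{m+n}^{\alpha+\beta}$ carrying $w_m\otimes w_n$ to $w_{m+n}$, and then inducts on $m$ starting from the modules $A^\alpha$, $\Omega$, $\D$, whose boundedness is already known. Note also that for the theorem only your inclusion $\subseteq$ is needed (a bounded $\D$-stable $A$-module containing $w_m$ suffices); the reverse inclusion, while true, is superfluous.
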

\begin{proof}
We first prove the second part. Let $M$ be a nonzero bounded submodule of $A_{(z)}\otimes U^{\alpha}_{m}$.
Since $M$ is dense, it contains a vector of form $z^N\otimes v_0$ for large enough $N$. We may pick such a vector with minimal $N$.

We now compute
\[\Delta_{2,3}(z^N \otimes v_0) = -z^{N-1}y\left(N+\tfrac{1}{2}(\alpha-m)\right)\otimes v_0.\]
By the minimality of $N$ we must have $N+\frac{1}{2}(\alpha-m)=0$ which since $N$ is an integer means that $\frac{m-\alpha}{2}\in \bb{Z}$.

To prove the first statement, assume that $\frac{m-\alpha}{2}\in\bb{Z}$. 
First note that $w_m$ gives a correct generator for the cases discussed above: vector fields $\D$, $1$-forms $\Omega$, and all rank $1$-modules $A^{\alpha}$.
We now proceed by induction on $m$.
Dropping the tensor signs we may write $w_m$ as
\[w_m=z^{-\frac{\alpha+m}{2}}\sum_{i=0}^{m}{m\choose i}x^{m-i}y^{i}X^iY^{m-i} = z^{-\frac{\alpha+m}{2}}(xY+yX)^m.\]
Consider three $\fr{sl}_2$-submodules $U_{m}$, $U_{n}$, and $U_{m+n}$ of ${\bf k}[X,Y]$. The map $\varphi: U_{m} \otimes U_{n}\ra U_{m+n}$ given by multiplication in ${\bf k}[X,Y]$, ($f\otimes g \mapsto fg$), is a surjective homomorphism of $\fr{sl}_2$-modules. Introducing an action of the identity gives a corresponding surjective homomorphism of $\fr{gl}_2$-modules: $U_{m}^{\alpha} \otimes U_{n}^{\beta} \ra U_{m+n}^{\alpha+\beta}$. This morphism can now be further extended to a surjective homomorphism of $A\D$-modules:
\[\varphi: (A_{(z)}\otimes U_{m}^{\alpha})\otimes (A_{(z)}\otimes U_{n}^{\beta}) \ra A_{(z)}\otimes U_{m+n}^{\alpha + \beta}.\]
By the inductive assumption, $w_m$ and $w_n$ generate bounded submodules in $A_{(z)}\otimes U_{m}^{\alpha}$ and $A_{(z)}\otimes U_{n}^{\beta}$ respectively, so $w_m \otimes w_n$ generates a bounded submodule in $(A_{(z)}\otimes U_{m}^{\alpha}) \otimes (A_{(z)}\otimes U_{n}^{\beta})$.

Applying our multiplication map we conclude that $\varphi(w_m \otimes w_n)$ generates a bounded submodule in $A_{(z)}\otimes U_{m+n}^{\alpha + \beta}$.
 But 
\begin{align*}
\varphi(w_m \otimes w_n) &= \varphi\left(z^{-\frac{\alpha+m}{2}}(xY+yX)^m \otimes z^{-\frac{\beta+n}{2}}(xY+yX)^n\right)\\
&=z^{-\frac{(\alpha+\beta)+(m+n)}{2}}(xY+yX)^{m+n}=w_{m+n}.
\end{align*}
This concludes the proof.
\end{proof}


\section{Tensor product decomposition}
Note in $A_{(h)}\otimes_A A_{(h)}$ we have 
\[h^{-1}\otimes 1 = h^{-1}\otimes hh^{-1}=h^{-1}h\otimes h^{-1} = 1\otimes h^{-1}.\]
This shows that $A_{(h)}\otimes_A A_{(h)} \simeq A_{(h)}$.
Therefore, from the classical decomposition of $\fr{gl}_2$-modules it follows that
\[(A_{(h)}\otimes U_{m}^{\alpha}) \otimes_{A} (A_{(h)}\otimes U_{n}^{\beta})=A_{(h)}\otimes(U_{m}^{\alpha} \otimes U_{n}^{\beta})
= \bigoplus_{i=0}^{n} A_{(h)}\otimes U^{\alpha + \beta}_{m+n-2i}.\]

We shall show that our $A\D$-modules which appear as submodules in $A_{(h)} \otimes U$ respect this decomposition.

\begin{thm}
In the category of tensor modules on the sphere, tensor products of simple modules decompose as a direct sums of simple tensor modules.
\end{thm}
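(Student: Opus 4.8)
The plan is to work throughout in the chart $N(z)$ and to assume, without loss of generality, that $n\le m$. Let $M$ and $N$ be the simple tensor modules corresponding to the simple $\GL_2$-modules $U_m^\alpha$ and $U_n^\beta$; by Theorem~\ref{subgen} their existence forces $\frac{m-\alpha}{2},\frac{n-\beta}{2}\in\bb Z$. Since the category $\fr T$ is closed under tensor products, $M\otimes_A N$ is again a tensor module, with associated $\GL_2$-module $U:=U_m^\alpha\otimes U_n^\beta$ and with an injective embedding $\varphi\colon M\otimes_A N\hookrightarrow A_{(z)}\otimes U$. The classical Clebsch--Gordan rule gives a decomposition $U=\bigoplus_{i=0}^{n}U_i'$ with $U_i':=U_{m+n-2i}^{\alpha+\beta}$ into pairwise non-isomorphic simple $\GL_2$-modules, and my goal is to establish the corresponding $A\D$-module isomorphism $M\otimes_A N\simeq\bigoplus_{i=0}^{n}M_i$, where $M_i$ denotes the simple tensor module attached to $U_i'$.

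First I would produce the summands as quotients. The Clebsch--Gordan projections $\pi_i\colon U\to U_i'$ are $\GL_2$-equivariant, hence induce $A\D$-homomorphisms on $A_{(z)}\otimes U$; composing with $\varphi$ yields $A\D$-maps $\psi_i\colon M\otimes_A N\to A_{(z)}\otimes U_i'$. Because the decomposition of $U$ is $\GL_2$-stable, the transition cocycle splits in block-diagonal form as $\rho(C)=\bigoplus_i\rho_i(C)$, so that $\pi_i\rho(C)=\rho_i(C)\pi_i$ and hence $\ker\psi_i$ is independent of the chart. Consequently the image $M_i:=\psi_i(M\otimes_A N)$ carries compatible embeddings into $A_{(h)}\otimes U_i'$ for every chart, intertwined by $\rho_i(C)$; that is, $M_i$ is itself a tensor module. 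Since $U_i'$ is simple, the simplicity theorem of this section shows that $M_i$ is a simple $A\D$-module, and by the corollary it is the unique simple submodule of $A_{(z)}\otimes U_i'$.

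The step that needs genuine input is the non-vanishing of every $M_i$. Here one cannot argue with the single generator $w_m\otimes w_n$: a direct transvectant computation shows $\pi_i(w_m\otimes w_n)=0$ for all $i\ge 1$, since $w_m$ and $w_n$ are built from the \emph{same} linear form $xY+yX$, so this element only detects the top component $U_0'$. Instead I would invoke density: by Proposition~\ref{density} both $M$ and $N$ are dense, so $z^KA\otimes U_m^\alpha\subset M$ and $z^{K'}A\otimes U_n^\beta\subset N$ for suitable $K,K'$. Tensoring these inclusions over $A$ and mapping into $A_{(z)}\otimes U$ shows that $\varphi(M\otimes_A N)\supset z^{K+K'}A\otimes U$, whence $\psi_i(M\otimes_A N)\supset z^{K+K'}A\otimes U_i'\ne 0$ for every $i$. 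Thus each $M_i$ is a non-zero simple tensor module.

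Finally I would assemble the pieces. The map $\Phi=(\psi_0,\dots,\psi_n)\colon M\otimes_A N\to\bigoplus_{i=0}^n A_{(z)}\otimes U_i'$ is injective because it is $\varphi$ followed by the Clebsch--Gordan isomorphism, and by the previous paragraph its image is a submodule of $\bigoplus_i M_i$ that surjects onto each factor. The $M_i$ have pairwise distinct generic $A$-ranks $\dim U_i'=m+n-2i+1$, so they are pairwise non-isomorphic even as $A$-modules; the standard fact that a submodule of a finite direct sum of pairwise non-isomorphic simple modules which surjects onto each summand must be the whole sum then forces $\Phi$ to be an isomorphism onto $\bigoplus_i M_i$. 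I expect the main obstacle to be the verification in the second step that the projected images $M_i$ really are tensor modules --- that is, controlling the chart compatibility through the block-diagonal form of $\rho(C)$ --- together with the realization in the third step that density, rather than the explicit generator, is what forces all lower Clebsch--Gordan components to survive.
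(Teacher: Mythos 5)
Your proposal is correct, and its skeleton coincides with the paper's: embed $M\otimes_A N$ into $A_{(z)}\otimes(U_m^\alpha\otimes U_n^\beta)$, decompose by Clebsch--Gordan, and use density of $M$ and $N$ to conclude that each projected image is a nonzero, bounded, hence dense and simple module. Where you genuinely diverge is the concluding mechanism. The paper proves the \emph{internal} decomposition $M\otimes N=\bigoplus_k\pi_k(M\otimes N)$ by showing directly that each projection preserves $M\otimes N$: for $v\in M\otimes N$ and $j$ large, $\pi_k(z^jv)=z^j\pi_k(v)$ already lies in the dense part $z^K A\otimes U\subset M\otimes N$, and then simplicity of $\pi_k(M\otimes N)$ yields $\pi_k(v)\in A\#U(\D)\cdot\pi_k(z^jv)\subset M\otimes N$. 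You instead run an \emph{external} argument: the image of $\Phi=(\psi_0,\dots,\psi_n)$ is a submodule of $\bigoplus_i M_i$ surjecting onto each factor, and since the $M_i$ are pairwise non-isomorphic simple modules (distinct generic $A$-ranks $m+n-2i+1$), a Goursat-type lemma forces the image to be the whole sum. Both are valid; the paper's multiplication-by-$z^j$ trick avoids any need for pairwise non-isomorphism, while your route avoids that trick at the cost of the (easy) rank observation. Two further points in your favour: your verification that each $M_i$ is honestly a tensor module --- block-diagonality of $\rho(C)$ over the Clebsch--Gordan decomposition, so that $\ker\psi_i$ is chart-independent --- makes explicit what the paper compresses into ``by previous results'' when it asserts simplicity of $\pi_k(M\otimes N)$ (simplicity genuinely needs embeddings in all three charts, since density alone only controls the ideal near $z=0$); and your observation that $\pi_i(w_m\otimes w_n)=0$ for $i\ge 1$, both generators being powers of the same linear form $xY+yX$, correctly identifies why the explicit generators of Theorem~\ref{subgen} cannot detect the lower components --- though the paper never attempts that route, so this remark guards against a pitfall rather than repairs one.
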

\begin{proof}
Let $M$ and $N$ be simple tensor modules embedded as $M \subset A_{(z)}\otimes U_{m}^{\alpha}$ and $N \subset A_{(z)}\otimes U_{n}^{\beta}$ with $m\geq n$.
Then \[M\otimes_A N \subset (A_{(z)}\otimes U_{m}^{\alpha}) \otimes_{A} (A_{(z)}\otimes U_{n}^{\beta})=A_{(z)}\otimes(U_{m}^{\alpha} \otimes U_{n}^{\beta})
= \bigoplus_{i=0}^{n} A_{(z)}\otimes U^{\alpha + \beta}_{m+n-2i}.\]
Write $\pi_k$ for the projection onto the $k$-th direct summand: \[\pi_k:A_{(z)}\otimes(U_{m}^{\alpha} \otimes U_{n}^{\beta}) \ra  A_{(z)}\otimes U^{\alpha + \beta}_{m+n-2k}.\] Then $\mathrm{id}_{A_{(z)}\otimes(U_{m}^{\alpha} \otimes U_{n}^{\beta})} = \bigoplus_{k=0}^{n} \pi_k$. We shall show that this decomposition still holds when restricted to the subspace $M\otimes_A N$. For this it suffices to check that $\pi_k(M\otimes N) \subset M\otimes N$.
Let $v\in M\otimes_A N$. By the density of $M$ and $N$, we see that $M\otimes N$ is dense in $\bigoplus_{i=0}^{n} A_{(z)}\otimes U^{\alpha + \beta}_{m+n-2i}$ as well. This implies that $\pi_k(M\otimes N)$ is nonzero for each $k$, so $\pi_k(M\otimes N)$ is a nonzero bounded submodule of $A_{(z)}\otimes U_{m+n-2k}^{\alpha+\beta}$, and by previous results $\pi_k(M\otimes N)$ is dense and simple. 

Now for arbitrary $j\geq 0$ we have $z^jv = \sum_{k}\pi_k(z^jv)$, 
and for $j$ large enough we get $\pi_k(z^jv)\in M\otimes N$ by the density of $M\otimes N$.

Next, by the simplicity of $\pi_k(M\otimes N)$, we have \[\pi_k(v) \in A\#U(\D) \cdot \pi_k(z^jv) \subset M\otimes N.\]
Thus we have shown that $id_{M\otimes N} =\bigoplus \pi_k|_{M\otimes N}$ and hence 
\[M\otimes N = \bigoplus \pi_k(M\otimes N).\]

\end{proof}

\begin{cor}
The category of finite-dimensional rational $\GL_2$-modules is equivalent to a full subcategory of the category $\fr{T}$ of tensor modules on the sphere.
Moreover, this subcategory is generated by $A^{-2}$ and $\Omega$ as a monoidal abelian category.
\end{cor}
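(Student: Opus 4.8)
The plan is to construct an explicit functor $\mathcal{F}$ from the category of finite-dimensional rational $\GL_2$-modules to $\fr{T}$ and to show it is a fully faithful monoidal functor; the desired subcategory is then its essential image. First I would recall that $\GL_2$ is reductive in characteristic zero, so every finite-dimensional rational $\GL_2$-module is a direct sum of simples, and the simple ones are exactly the $U_m^\alpha$ whose torus weights $\tfrac{1}{2}(\alpha\pm m)$ are integers, i.e.\ those with $\tfrac{m-\alpha}{2}\in\bb{Z}$. These are precisely the modules $U$ for which Theorem~\ref{subgen} produces a bounded submodule $T_m^\alpha\subset A_{(z)}\otimes U_m^\alpha$ generated by $w_m$, which by the simplicity theorem is a simple tensor module. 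I would define $\mathcal{F}(U_m^\alpha)=T_m^\alpha$ on simples and extend additively using semisimplicity. A preliminary point to check is that $T_m^\alpha$ genuinely lies in $\fr{T}$, i.e.\ that the chart-wise copies obtained by rotating coordinates glue via the cocycle $\rho(C)$; this is the change-of-variables identification already verified for $\Omega$ and $\D$, applied to the summands of their tensor powers.

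For full faithfulness I would first establish a Schur lemma on the tensor side: any $A\D$-endomorphism of a dense bounded submodule $M\subset A_{(z)}\otimes U$ extends uniquely, after localization, to an $A_{(z)}$-linear $\D$-equivariant endomorphism of $A_{(z)}\otimes U$, and such an endomorphism is given by an $\mathrm{End}_{\bf k}(U)$-valued function that $\D$-equivariance forces to be constant and to commute with the $\fr{gl}_2$-action; when $U$ is simple this is a scalar, so $\mathrm{End}_{A\D}(T_m^\alpha)={\bf k}$. The same localization argument shows $T_m^\alpha\cong T_{m'}^{\alpha'}$ only if the fibers agree, since comparing generic $A$-rank gives $m=m'$ and comparing the $\fr{gl}_2$-action on the fiber gives $\alpha=\alpha'$. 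Thus on simple objects both $\Hom$-spaces are one-dimensional exactly when the labels coincide and vanish otherwise, so $\mathcal{F}$ induces isomorphisms on all $\Hom$-spaces by semisimplicity.

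The monoidal compatibility is supplied by the tensor product decomposition theorem: $T_m^\alpha\otimes_A T_n^\beta$ decomposes as $\bigoplus_i T_{m+n-2i}^{\alpha+\beta}$, matching the Clebsch--Gordan rule $U_m^\alpha\otimes U_n^\beta\cong\bigoplus_i U_{m+n-2i}^{\alpha+\beta}$, so $\mathcal{F}$ carries $\otimes_{\bf k}$ to $\otimes_A$ and is monoidal. Finally I would identify the generators: $\Omega\cong T_1^1=\mathcal{F}(V)$ is the image of the natural module $V=U_1^1$, and $A^{-2}=T_0^{-2}=\mathcal{F}(\det^{-1})$ is the image of $\det^{-1}=U_0^{-2}$. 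Since every simple $U_m^\alpha$ with $\tfrac{m-\alpha}{2}\in\bb{Z}$ is a direct summand of $V^{\otimes m}\otimes(\det^{-1})^{\otimes j}$ for a suitable $j$ — using $\mathrm{Sym}^m V\subset V^{\otimes m}$ together with $\det=\Lambda^2 V\subset V\otimes V$ to reach every determinant twist — the monoidal abelian category generated by $V$ and $\det^{-1}$ is all of rational $\GL_2$-Mod; transporting through the equivalence, the image subcategory is monoidally generated by $\Omega$ and $A^{-2}$.

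I expect the Schur-lemma and distinctness step to be the main obstacle, since it is where one must control $A\D$-endomorphisms rather than merely $A$-linear or $\fr{gl}_2$-linear ones; the localization-to-a-constant-fiber argument is the crux, and the verification that each $T_m^\alpha$ glues into an object of $\fr{T}$ across all three charts is the secondary point requiring care.
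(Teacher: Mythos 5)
Your proposal is correct, and its core coincides with the paper's: your functor $\mathcal{F}(U_m^\alpha)=T_m^\alpha$ is exactly the paper's $U\mapsto\mathrm{soc}(A_{(z)}\otimes U)$ (by the uniqueness of simple submodules of $A_{(z)}\otimes U$, the submodule generated by $w_m$ is the socle), and in both treatments the monoidal compatibility is precisely the tensor product decomposition theorem. You diverge in two places. First, the paper asserts the equivalence in one sentence and never verifies full faithfulness; your localization Schur lemma giving $\mathrm{End}_{A\D}(T_m^\alpha)={\bf k}$ and the pairwise non-isomorphism is genuine added content, and the distinctness step can be done cleanly by noting that any $A\D$-morphism intertwines the canonical operators $z\otimes E_{i,j}$ of Proposition~\ref{closedop}, so $z\otimes(E_{1,1}+E_{2,2})$, which is multiplication by $\alpha z$, forces $\alpha=\alpha'$, while generic $A$-rank forces $m=m'$. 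Second, for generation you work on the $\GL_2$ side --- every simple is a summand of $V^{\otimes m}\otimes(\det^{-1})^{\otimes j}$, with $\det=\Lambda^2V\subset V\otimes V$ supplying positive determinant powers --- and transport through the monoidal exact equivalence, whereas the paper works on the tensor-module side, embedding each bounded $M\subset A_{(z)}\otimes U_m^\alpha$ into $A^{\alpha-m}\otimes_A\Omega^{\otimes m}$ and then dropping $A^2$ as a generator because it is a summand of $\Omega\otimes\Omega$. These are mirror images of the same Clebsch--Gordan fact: your version keeps the combinatorics in the familiar category of $\GL_2$-modules but leans on monoidality of $\mathcal{F}$; the paper's version avoids invoking the functor in the generation step at the cost of working with the less familiar objects. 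The one point you share with the paper and rightly flag as delicate is membership of $T_m^\alpha$ in $\fr{T}$, i.e.\ the gluing over the three charts via $\rho(C)$; your suggested route --- realize $T_m^\alpha$ as a direct summand of $A^{\alpha-m}\otimes_A\Omega^{\otimes m}$, which lies in $\fr{T}$ because the paper verifies $A^{\pm2},\Omega\in\fr{T}$ and $\fr{T}$ is closed under tensor products --- is a legitimate way to close this, and your treatment is no less rigorous than the paper's own.
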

\begin{proof}
The equivalence is provided by the functor $F: U \mapsto \text{soc}(A_{(z)}\otimes U)$. This is a bijection between simple finite-dimensional rational $\GL_2$-modules and simple tensor modules.

For the second statement we note that the tensor products of $A\D$-modules of rank $1$ is given by
\[A^{\alpha} \otimes A^{\beta} \simeq A^{\alpha+\beta}.\]
In particular, we get $A^{2k}=(A^{2})^{\otimes k}$ and $A^{-2k}=(A^{-2})^{\otimes k}$ for any $k\in \bb{N}$. We also have
\[\D \otimes_A A^{2} \simeq \Omega \qquad \text{ and } \qquad \Omega \otimes_A A^{-2} \simeq \D.\]

Let $M$ be a bounded submodule of $A_{(z)}\otimes U_{m}^{\alpha}$. Then $\frac{m-\alpha}{2}$ is an integer by Lemma~\ref{subgen}. Now let $V=U_{1}^{1}$ be the natural $\fr{gl}_2$-module. It is well known that $U_m \subset V^{\otimes m}$ as $\fr{sl_2}$-modules, so we get $U_{m}^{\alpha} \subset A^{\alpha-m}\otimes V^{\otimes n}$. This in turn implies that $M \subset A^{\alpha-m} \otimes_A \Omega^{\otimes m}$.

Thus the category $\fr{T}$ is generated by $A^{2}$, $A^{-2}$, and $\Omega$ as a monoidal abelian category. However, $A^{2}$ is a direct summand of $\Omega \otimes \Omega$ so it may be dropped as a generator.
\end{proof}


\end{document}